\newif
\newcommand{\mscexp}[1]{}
\newcommand{\mscexp}[1]{ [\protect#1]}
\newtheorem{theorem}{Theorem}
\newtheorem{lemma}{Lemma}[section]
\theoremstyle{definition}
\newtheorem{definition}[lemma]{Definition}
\theoremstyle{remark}
\newtheorem{example}[lemma]{Example}
\newtheorem{remark}[lemma]{Remark}
\newcommand{\calA}{\mathcal{A}}
\newcommand{\bigOh}{\mathcal{O}}
\newcommand{\calL}{\mathcal{L}}
\newcommand{\calM}{\mathcal{M}}
\newcommand{\calB}{\mathcal{B}}
\newcommand{\calS}{\mathcal{S}}
\newcommand{\calN}{\mathcal{N}}
\DeclareMathOperator{\diag}{diag}
\newcommand{\eps}{\varepsilon}
\newcommand{\bfX}{\boldsymbol{X}}
\newcommand{\bfY}{\boldsymbol{Y}}
\newcommand{\SSDE}{\text{SSDE}}
\DeclareMathOperator{\sgn}{sgn}
\DeclareMathOperator{\add}{add}
\DeclareMathOperator{\Res}{Res}
\newcommand{\bfz}{\boldsymbol{z}}
\newcommand{\bfc}{\boldsymbol{c}}
\newcommand{\bfx}{\boldsymbol{x}}
\newcommand{\bfy}{\boldsymbol{y}}
\newcommand{\bfs}{\boldsymbol{s}}
\newcommand{\qh}{\{\frac q2\}}
\newcommand{\mqh}{\{-\frac q2\}}
\newcommand{\qhp}{\{\frac q2+1\}}
\newcommand{\mqhm}{\{-\frac q2-1\}}
\newcommand{\qhm}{\{\frac q2-1\}}
\newcommand{\mqhp}{\{-\frac q2+1\}}
\renewcommand{\MR}[1]{}
\newif{\ifoverfulboxhacks}
\newcommand{\OBHnotag}{\notag}
\newenvironment{equationaligned}{\align}{\endalign}
\newcommand{\OBHnotag}{}
\newenvironment{equationaligned}{\equation\aligned}{\endaligned\endequation}
\title{Analysis of Carries in Signed Digit Expansions}
\author{Clemens Heuberger}
\author{Sara  Kropf}
\address[C.~Heuberger and S.~Kropf]{Institut f\"ur Mathematik, Alpen-Adria-Universit\"at Klagenfurt,
  Universit\"atsstra\ss e 65--67, 9020 Klagenfurt, Austria}
\email{\href{mailto:clemens.heuberger@aau.at}{clemens.heuberger@aau.at}}
\email{\href{mailto:sara.kropf@aau.at}{sara.kropf@aau.at}}
\thanks{C.~Heuberger and S.~Kropf are supported by the Austrian Science Fund (FWF):
  P~24644-N26 and by the Karl Popper Kolleg ``Modeling-Simulation-Optimization'' funded by the Alpen-Adria-Universität Klagenfurt and by the Carinthian Economic Promotion Fund (KWF)}
\author{Helmut Prodinger}
\address[H.~Prodinger]{Department of Mathematical Sciences, Stellenbosch University, 7602 Stellenbosch,
 South Africa}
\thanks{H.~Prodinger is supported by an incentive grant of the National
  Research Foundation of South Africa.}
\thanks{Parts of the
  article were written while S.~Kropf was a visitor at
  Stellenbosch University.}
\email{\href{mailto:hproding@sun.ac.za}{hproding@sun.ac.za}}
\keywords{Carry, central limit theorem, transducer, probabilistic automaton,
  symmetric signed digit expansion, von Neumann's addition}
\subjclass[2010]{11A63\mscexp{Radix representation; digital problems [For
    metric results, see 11K16]};
60C05\mscexp{Combinatorial probability},
60F05\mscexp{Central limit and other weak theorems},
68Q45\mscexp{Formal languages and automata [See also 03D05, 68Q70, 94A45]},
68W40\mscexp{Analysis of algorithms [See also 68Q25]}}
\begin{document}
\begin{abstract}
  The number of positive and negative carries in the addition of two
  independent random signed digit expansions of given length is analyzed
  asymptotically for the $(q, d)$-system and the symmetric signed digit
  expansion. The results include expectation, variance, covariance between the
  positive and negative carries and a central limit theorem.

  Dependencies between the digits require determining suitable transition
  probabilities to obtain equidistribution on all expansions of given length.
  A general procedure is described to obtain such transition probabilities for
  arbitrary regular languages.

  The number of iterations in von Neumann's parallel addition method
  for the symmetric signed digit expansion is also analyzed, again including
  expectation, variance and convergence to a double exponential limiting
  distribution. This analysis is carried out in a general framework for
  sequences of generating functions.
\end{abstract}  
\maketitle

\section{Introduction}

Addition is an essential arithmetic operation in many algorithms. As the efficiency of
addition is influenced by the number of occurring carries, we asymptotically
analyze this number, which depends on the base and the digit set of the digit expansion.

We consider two different types of digit expansions: On the one hand, we investigate
$(q,d)$-expansions, that are extensions of the standard $q$-ary digit
expansion with digit set $\{d,\allowbreak\ldots,\allowbreak q+d-1\}$. With $d=0$, this includes the
case of the standard $q$-ary expansion. Consecutive
digits are independent in this case. On the other hand, the symmetric signed
digit expansion \cite{Heuberger-Prodinger:2001:minim-expan} has an even base $q$ and the redundant digit set
$\{-q/2,\ldots,q/2\}$. To remove the redundancy, there is a syntactical rule
to decide which of the digits $-q/2$ and $q/2$ is used. This rule introduces
dependencies between consecutive digits.

Two different addition
algorithms are investigated. The first one is the standard addition:  We add
 two digits starting at the least significant position. If the result is not in the given digit
set or does not fulfill the syntactical conditions, then a non-zero carry is produced. This
carry is added to the sum of the two digits at the next position. An example for this standard addition of two
decimal expansions is given in Table~\ref{tab:example-standard-addition}.

\begin{table}
  \centering
  \begin{equation*}
    \begin{array}{c@{\,}c@{\,}c@{\,}c}
      2_{\phantom{0}}&1_{\phantom{1}}&{4}_{\phantom{1}}&{6}_{\phantom{0}}\\
      1_{0}&{2}_{1}&{5}_{1}&{5}_{0}\\\hline
      3_{\phantom{0}}&4_{\phantom{1}}&{0}_{\phantom{1}}&{1}_{\phantom{0}}
    \end{array}
  \end{equation*}
  \caption{Example for standard addition in the decimal system. The
    subscripts in the second row are the carries.}
  \label{tab:example-standard-addition}
\end{table}

In the case of positive and negative digits, positive and negative carries occur.
The parameters of interest is their number for an independent pair of random summands of given length.

In contrast to standard addition, von Neumann's addition is a parallel
algorithm with several iterations. The idea is to add the digits at each position
in parallel (the interim result). If this result is not admissible in the given digit system, then a non-zero carry
is produced and the interim result is corrected correspondingly at this position. However, this carry is not
added immediately: The interim result
and the carries are the input for the next iteration. When the carry sequence
only contains zeros, then the algorithm terminates. An example for von
Neumann's addition is shown in Table~\ref{tab:intro-ssde-10} for the addition of two decimal
expansions. 
\begin{table}
  \begin{center} \setlength{\extrarowheight}{1pt}
$$    \begin{array}{rl}
      5377& \text{first summand}\\
      8125& \text{second summand}\\\hline
      3492& \text{first interim result}\\
      10010&\text{carries}\\\hline
      13402&\text{second interim result}\\
      000100&\text{carries}\\\hline
      013502&\text{final result}\\
      0000000&
      \end{array}$$
    \caption{Example for von Neumann's addition in the decimal system.}
    \label{tab:intro-ssde-10}
  \end{center}
\end{table}

The number of iterations of von Neumann's addition is of
interest as it corresponds to the running time. 

Diaconis and Fulman \cite{Diaconis-Fulman:2014:combin} and Nakano and Sadahiro \cite{Nakano-Sadahiro:2014:carries} consider the carries of the standard addition as a Markov chain. This is only valid if the  digits of the digit
expansion are independent. In their analysis, they obtain a stationary distribution.
In this article, we determine the expectation, variance and central limit theorem for the number of positive and negative carries as well as the covariance between the positive and negative carries in the $(q,d)$-system and the symmetric signed digit system. The authors of \cite{Diaconis-Fulman:2014:combin} concentrate on an \emph{odd} basis $q$ and the symmetric digit set $\{-(q-1)/2,\ldots,(q-1)/2\}$. The symmetric signed digit expansion (defined later)
is the natural way to define a unique representation with a symmetric set of digits and an \emph{even} base $q$.
Thus, a part of the present paper can be seen as a complement of \cite{Diaconis-Fulman:2014:combin}.

The expected number of iterations of von Neumann's addition was analyzed in \cite{Knuth:1978:carry} and \cite{Heuberger-Prodinger:2003:carry-propag} for standard
$q$-ary expansions and $(q,d)$-expansions, respectively. It turns out that the expected number of iterations is logarithmic in the length of the expansions.
In
\cite{Heuberger-Prodinger:2003:carry-propag}, symmetric signed digit
expansions are analyzed, too, but with a simplified probabilistic model since a
precise probabilistic model exceeded computing resources available at that
time. This simplification has a significant influence on the main term. In
this paper, we combine advances in soft- and hardware with sophisticated use
of the finite state machine package of
SageMath~\cite{SageMath:2016:7.0} to tackle the precise model
in roughly 10 minutes of CPU time. The results include expectation, variance and convergence to a double exponential distribution.

The outline of the paper is as follows. In Section~\ref{sec:digit-exp}, we define $(q,d)$-expansions and symmetric
signed digit expansions. We first analyze the standard addition in Sections~\ref{sec:standard-addition}--\ref{sec:asympt-analys-stand}. The algorithms and the corresponding transducers
for the standard addition of $(q,d)$-expansions and symmetric signed digit
expansions are presented in Section~\ref{sec:standard-addition}.  Our probabilistic model
is to choose both summands of length $\ell$ independently such that each
expansion of length $\ell$ is equally likely. In the case of the symmetric
signed digit expansions, the dependencies between the digits require
approximating the equidistribution with an error that does not influence the final result. The corresponding probabilities are defined in Lemma~\ref{lemma:conditional-probabilities} in Section~\ref{sec:approx-equidist} for general
regular languages, see also \cite{Shannon:1948:mathem-theor-commun} and \cite{Parry:1964:intrin-markov}. In Section~\ref{sec:asympt-analys-stand}, we combine this approximate
equidistribution with the transducers from Section~\ref{sec:automata} to
obtain  an asymptotic analysis including the expectation, the
variance and asymptotic normality in the main Theorems \ref{thm:q-d-stand} and \ref{thm:ssde-stand} for the $(q, d)$-system and the symmetric signed digit system, respectively. 

Then, we analyze von Neumann's addition. We start in
Section~\ref{sec:von-neum-addit} with the algorithms and the automaton. Theorem~\ref{theorem:bootstrapping} provides a general framework for the analysis of sequences occurring in this context. Then we again use the approximate equidistribution from
Section~\ref{sec:approx-equidist} to asymptotically analyze the number of
iterations of von Neumann's addition in
Theorem~\ref{thm:ssde-neumann} in Section~\ref{sec:asympt-analys-von}. This
analysis extends the results in
\cite{Knuth:1978:carry} and \cite{Heuberger-Prodinger:2003:carry-propag} to the
symmetric signed digit expansions and to include not only the expected value
but also the variance and a convergence in distribution.

Obtaining the values of the constants occuring in the asymptotic analysis of standard and von Neumann's addition requires computations 
involving finite state machines and determinants of matrices in several variables. These
computations are performed using the mathematical software system SageMath
\cite{SageMath:2016:7.0}. Notebooks containing all the
computations can be found at
\cite{Heuberger-Kropf-Prodinger:2015:analy-carries-online}. However, the
existence of these constants follows from the theoretical results.

\section{Digit Expansions}\label{sec:digit-exp}
In this section, we define the digit expansions which will be used in later
sections. We also recall their properties.

\subsection{$(q,d)$-expansions}
\begin{definition}
  Let $-q<d\leq 0$ be two integers with $q\geq 2$. The $(q,d)$-expansion of an integer $x$ is the
  $q$-ary expansion $(x_{\ell}\ldots x_{0})_{q}$ with digits
  $x_{i}\in\{d,\ldots, q+d-1\}$ such that $x=\sum_{i=0}^{\ell}x_{i}q^{i}$.
\end{definition}

\begin{example}
  The $(4,-1)$-expansion of $3$ is $(1\bar{1})_{4}$, where we write $\bar{1}$
  for the digit $-1$.
\end{example}

The $(q,d)$-expansion exists for all integers if $d\neq 0$ and
$d\neq -q+1$. For $d=0$ (this is the standard $q$-ary expansion), only the non-negative integers have a
$(q,d)$-expansion. Conversely, for $d=-q+1$, only  the non-positive integers
have a $(q,d)$-expansion. If the $(q,d)$-expansion of an integer exists, then
it is unique up to leading zeros.

If $q$ is odd and $d=\frac{-q+1}{2}$, then the $(q,d)$-expansion minimizes the
sum of absolute values of the digits among all $q$-ary expansions with
arbitrary digits (see~\cite{Heuberger-Prodinger:2001:minim-expan}).

\subsection{Symmetric Signed Digit Expansion}

We recall the definition of the symmetric signed digit expansion (SSDE) as defined in~\cite{Heuberger-Prodinger:2001:minim-expan} and further analyzed
in \cite{Heuberger-Prodinger:2003:carry-propag}.

\begin{definition}
  Let $q\geq 2$ be an even integer.
  The symmetric signed digit expansion (SSDE) of an integer is the $q$-ary
  digit expansion $(x_{\ell}\ldots x_{0})_{q}$ with $x_{i}\in\{-\frac
  q2,\ldots,\frac q2\}$ such that the syntactical rule 
  \begin{equation*}
    \lvert x_{j}\rvert=\frac q2\quad \Longrightarrow\quad 0\leq\sgn(x_{j})x_{j+1}\leq \frac q2-1
  \end{equation*}
  is satisfied for $0\leq j<\ell$.
\end{definition}
In \cite{Heuberger-Prodinger:2001:minim-expan}, it is shown that each integer $n$
has a unique SSDE (up to leading zeros). It minimizes the sum of absolute values of the digits among all $q$-ary
expansions of $n$ with arbitrary digits (cf.\ \cite{Heuberger-Prodinger:2001:minim-expan}).

For $q=2$, we obtain the digit set $\{0,\pm1\}$ and the syntactical rule that
at least one of any two adjacent digits is zero. This digit expansion is also
called non-adjacent form (cf.\ \cite{Reitwiesner:1960}).

\section{Standard Addition}\label{sec:standard-addition}
We write bold face letters for
sequences which are padded with zeros on the left.

Let $\bfx=\ldots x_{1}x_{0}$
and $\bfy=\ldots y_{1}y_{0}$ be the two summands given as $q$-ary expansions with
digit set $D$
(possibly satisfying some syntactical rules). Then standard addition can be
written in the form
\begin{equation*}
  \begin{array}{cccc}
    \ldots &&{x_{1}}_{\phantom{c_{1}}}&{x_{0}}_{\phantom{c_{0}}}\\
    \ldots &{}_{c_{2}}&{y_{1}}_{c_{1}}&{y_{0}}_{c_{0}}\\\hline
    \ldots &&{z_{1}}_{\phantom{c_{1}}}&{z_{0}}_{\phantom{c_{0}}}
  \end{array}
\end{equation*}
where $x_{i}+y_{i}+c_{i}=z_{i}-qc_{i+1}$, $c_{0}=0$ with $z_{i}\in D$ and $\bfz=\ldots
z_{1}z_{0}$ satisfying the syntactical rules of the digit system under consideration. We
asymptotically analyze the
sequence of carries $\bfc=\ldots c_{2}c_{1}$.

From a different point of view, the standard addition with digit
set $D$ is a conversion between different digit sets: We have a $q$-ary
digit expansion with digits in $D+D$ and we want to transform this
digit expansion into a digit expansion with digit set $D$ satisfying all
syntactical rules. This can be written in the form
\begin{equation*}
  \begin{array}{cccc}
    \ldots &{}_{c_{2}}&{s_{1}}_{c_{1}}&{s_{0}}_{c_{0}}\\\hline
    \ldots &&{z_{1}}_{\phantom{c_{1}}}&{z_{0}}_{\phantom{c_{0}}}
  \end{array}
\end{equation*}
where $s_{i}=x_{i}+y_{i}\in D+D$. We call the sequence $\bfs$ the digitwise
sum of $\bfx$ and $\bfy$ and write $\bfs=\bfx+\bfy$.

We will mostly use this point of view. Most of the algorithms and
  transducers require the input of $\bfs$. If there are
  syntactical rules for $\bfx$ and $\bfy$, then the
  sequence $\bfs$ can not be arbitrary.
\begin{remark}\label{rem:interchange}
  From this point of view, it is clear that interchanging two digits $x_{i}$ and $y_{i}$ of the two summands
  does not influence the result, but only both summands. The carries, the
  digitwise sum and the steps taken by the
  algorithms and the transducers stay the same as they depend only on the
  digitwise sum.
\end{remark}

\subsection{Algorithms}
\subsubsection{Standard Addition for $(q,d)$-expansions}The digit set is
$D=\{d,\ldots, q+d-1\}$.
Algorithm~\ref{alg:stand-add-q-d} transforms a $q$-ary
expansion with digit set $D+D$ into a $(q,d)$-expansion. As there are no
syntactical rules, all digits are
independent. Thus, we do not have to look ahead when choosing the carry.

An example of standard addition for $(5,-1)$-expansions using this algorithm
is given in Table~\ref{tab:std-q-d-ex}.

\begin{table}
  \centering
  \begin{equation*}
    \setlength{\extrarowheight}{2pt}
    \begin{array}{c@{\,}c@{\,}c@{\,}c}
      1_{\phantom{0}}&2_{\phantom{1}}&3_{\phantom{1}}&{\bar 1}_{\phantom{0}}\\
      1_{1}&{2}_{0}&{1}_{\bar 1}&{\bar 1}_{0}\\\hline
      {3}_{\phantom{1}}&{\bar 1}_{\phantom{1}}&{3}_{\phantom{0}}&3_{\phantom{0}}
    \end{array}
  \end{equation*}
  \caption{Example for standard addition for $(5,-1)$-expansions. The subscripts in the second row are the carries.}
  \label{tab:std-q-d-ex}
\end{table}

\begin{algorithm}
  \caption{Standard addition for two $(q,d)$-expansions }
  \label{alg:stand-add-q-d}
  \begin{algorithmic}
    \Require digit expansion $(s_{\ell}\ldots s_{0})_{q}$ with digits in $\{2d,\ldots,2q+2d-2\}$ 
    \Ensure $(q,d)$-expansion $z$ of $(s_{\ell}\ldots s_{0})_{q}$
    \State $z=()$
    \State $c=0$
    \For{$j=0$ to $\ell$}
        \State $a=s_{j}+c$
        \State $c=0$
        \If{$a\geq q+d$}      
          \State $c=1$
        \ElsIf{$a\leq d-1$}
          \State $c=-1$
        \EndIf
        \State $a=a-cq$
        \State $z = (a)+z$
    \EndFor
  \end{algorithmic}
\end{algorithm}

\subsubsection{Standard Addition for SSDEs}
Let $q\geq 2$ be even. Algorithm~\ref{alg:stand-add-ssde} transforms a $q$-ary
expansion with digit set $\{-q,\ldots,q\}$ into a SSDE. As the choice between
the redundant digits $\frac q2$ and $-\frac q2$ depends on the next digit, we
have to look ahead at the next digit in these cases. This algorithm is an
extension of the one in \cite{Heuberger-Prodinger:2003:carry-propag} taking
into account that we start with a larger digit set.

An example of standard addition for SSDEs with $q=4$ using this algorithm
is given in Table~\ref{tab:std-ssde-ex}.

\begin{table}
  \centering
  \begin{equation*}
    \setlength{\extrarowheight}{2pt}
        \begin{array}{c@{\,}c@{\,}c@{\,}c}
          1_{\phantom{0}}&{\bar1}_{\phantom{1}}&0_{\phantom{1}}&{2}_{\phantom{0}}\\
          1_{0}&{\bar 1}_{1}&{1}_{1}&{2}_{0}\\\hline
          {2}_{\phantom{1}}&{\bar 1}_{\phantom{1}}&{\bar2}_{\phantom{0}}&0_{\phantom{0}}
        \end{array}
  \end{equation*}
  \caption{Example for standard addition for SSDEs for $q=4$. The subscripts in the second row are the carries.}
  \label{tab:std-ssde-ex}
\end{table}

\begin{algorithm}
  \caption{Standard addition for two SSDEs}
  \label{alg:stand-add-ssde}
  \begin{algorithmic}
    \Require digit expansion $(s_{\ell}\ldots s_{0})_{q}$ with digits in $\{-q,\ldots,q\}$ 
    \Ensure SSDE $z$ of $(s_{\ell}\ldots s_{0})_{q}$
    \State $s_{\ell+1}=0$
    \State $z=()$
    \State $c=0$
    \For{$j=0$ to $\ell$}
        \State $a=s_{j}+c$
        \State $c=0$
        \If{$a>\frac q2$}
            \State $c=1$
        \ElsIf{$a<-\frac q2$}
            \State $c=-1$
        \ElsIf{$a=\frac q2$ and ($-\frac q2\leq s_{j+1}<0$ or $\frac q2\leq s_{j+1}<q$)}
            \State $c=1$
        \ElsIf{$a=-\frac q2$ and ($-q<s_{j+1}\leq -\frac q2$ or $0<
          s_{j+1}\leq \frac q2$)}
            \State $c=-1$
        \EndIf
        \State $a=a-cq$
        \State $z = (a)+z$
    \EndFor
  \end{algorithmic}
\end{algorithm}

\subsection{Transducers}\label{sec:automata}
In this section, we present the transducers for the algorithms
presented in the last section.

We are not interested in the output of the addition, but only in the
carries. Thus we only use the carries as the output of the transducer. But, if
required, the output digits can easily be reconstructed. 

In our setting, a transducer consists of a finite set of states $S$, a finite input
alphabet $D+D$, an output alphabet, a set of transitions $E\subseteq
S^{2}\times (D+D)$ with input labels in $D+D$, output labels in the output alphabet for each
transition, and an initial state. All states are final.

The input of the transducer is a digit expansion with digits in $D+D$. The
output of the transducer is the sequence of labels of a path starting in the
initial state with the given input as the input label. In our cases, there exists
always such a path and it is unique (i.e., the transducer is complete and deterministic).

The labels of the states encode the current carry (except for the situations
when we have to look ahead). The number of states is independent of $q$. The number of transitions between two states
depends on the base $q$.

To plot the transducer, we group these transitions
and their labels. We draw only one arc and write the label $M\mid c$ for
a set $M\subset D+D$ to represent a group of transitions consisting of one transition  with input label $m$ and
output label $c$ for every $m\in M$.  If $M$ is the empty set, then there are
no such
transitions. This may happen for special values of $d$ or $q$.

 The output label of a transition is one carry $c$, a pair of carries $c$, or no carry $c$, i.e., $c\in\{0,1,\bar1,-\}\cup\{0,1,\bar1\}^{2}$, where ``$-$''
denotes the empty output.
The input of the transducer is the sequence $\bfs$ of digitwise
sums.

Let $\ell$ and $u$ be the minimum and the maximum of the extended digit set $D+D$. For the labels of the transitions, we define 
\begin{align*}
M+\eps&=\big(\{m+\eps\mid m\in M\}\cup(M\cap \{\ell,u\})\big)\cap(D+D)
\end{align*}
for $\eps=\pm1$ and a set $M$. This definition is motivated by the following
interpretation: Whenever a set $M=\{j, \ldots, u\}$ occurs, it is actually
meant to be the interval $[j, \infty)$ intersected with the extended digit
set. Subtracting $1$ leads to $[j-1, \infty)$, again intersected with
the extended digit set. This corresponds to $M-1$ as defined above.

\subsubsection{Standard Addition for $(q,d)$-expansions}
The transducer in Figure~\ref{fig:stand-add-q-d} computes the carries as in
Algorithm~\ref{alg:stand-add-q-d}. We use the sets $L=\{2d,\ldots,d-1\}$,
$D=\{d,\ldots,q+d-1\}$ and $H=\{q+d,\ldots,2q+2d-2\}$. 

The transitions are
constructed by using Algorithm~\ref{fig:stand-add-q-d} for the current input
and carry.

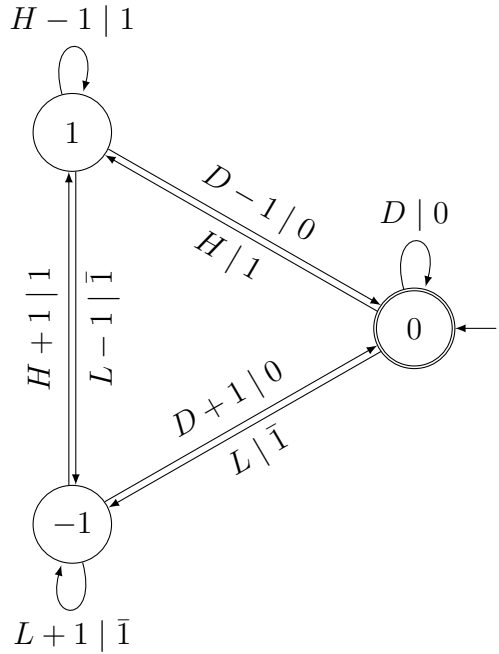
\begin{figure}
\centering
\begin{tikzpicture}[auto, initial text=, >=latex]
\node[state, accepting, initial, initial where=right] (v0) at (3.000000, 0.000000) {$0$};
\node[state] (v1) at (-1.500000, 2.598076) {$1$};
\node[state] (v2) at (-1.500000, -2.598076) {$-1$};
\path[->] (v2) edge[loop below] node {$L+1\mid \bar1$} ();
\path[->] (v2.35.00) edge node[rotate=30.00, anchor=south] {$D+1\mid
0$} (v0.205.00);
\path[->] (v2.95.00) edge node[rotate=90.00, anchor=south] {$H+1\mid
1$} (v1.265.00);
\path[->] (v0.-145.00) edge node[rotate=30.00, anchor=north]
{$L\mid \bar1$} (v2.25.00);
\path[->] (v0) edge[loop above] node {$D\mid 0$}
();
\path[->] (v0.155.00) edge node[rotate=330.00, anchor=north] {$H\mid 1$} (v1.325.00);
\path[->] (v1.-85.00) edge node[rotate=90.00, anchor=north] {$L-1\mid
\bar1$} (v2.85.00);
\path[->] (v1.-25.00) edge node[rotate=-30.00, anchor=south] {$D-1\mid
0$} (v0.145.00);
\path[->] (v1) edge[loop above] node {$H-1\mid 1$} ();
\end{tikzpicture}
\caption{Standard addition for two $(q,d)$-expansions.}
\label{fig:stand-add-q-d}
\end{figure}

\subsubsection{Standard Addition for SSDEs}
The transducer in Figure~\ref{fig:stand-add-ssde} computes the carries as in
Algorithm~\ref{alg:stand-add-ssde}. We use the sets $L=\{0,\ldots,\frac
q2-1\}$, $H=\{\frac q2+1,\ldots,q\}$  and
$H_{q}=\{\frac q2,\ldots,q-1\}$.

The transitions are
constructed by using Algorithm~\ref{fig:stand-add-ssde} for the current input
and carry.

The labels of the states $-1$, $0$ and $1$ encode the current carry. In the
states with labels $\pm\frac q2$, we do not know yet whether the digit of the
sum should be $\frac q2$ or $-\frac q2$ and thus, which carry is produced. To
decide this, we have to look at the next digit. Thus, the transitions leading
to a state $\pm\frac q2$ have no output ($-$) and the transitions starting at
a state $\pm\frac q2$ have two output digits.

\begin{figure}
\centering
\begin{tikzpicture}[auto, initial text=, >=latex, scale=0.85]
\path[use as bounding box] (-9.7,-8.5) rectangle (7,7);
\node[state, accepting, initial, initial where=right] (v0) at (2*3.000000, 0.000000) {$0$};
\node[state] (v1) at (-2*1.500000, 2*2.598076) {$1$};
\node[state] (v2) at (-2*1.500000, -2*2.598076) {$-1$};
\node[state] (v3) at (0, 0) {$\frac q2$};
\node[state] (v4) at (-2*4.5, -2*2.598076) {$-\frac q2$};
\path[->] (v2) edge[loop right] node[rotate=0] {$-H+1\mid \bar1$} ();
\path[->] (v2.35.00) edge node[rotate=30.00, anchor=south] {$(-L\cup L)+1\mid0$} (v0.205.00);
\path[->] (v2.95.00) edge node[rotate=90.00, anchor=south] {$H+1\mid1$} (v1.265.00);
\path[->] (v0.-145.00) edge node[rotate=30.00, anchor=north] {$-H\mid\bar1$} (v2.25.00);
\path[->] (v0) edge[loop above] node {$-L\cup L\mid0$} ();
\path[->] (v0.155.00) edge node[rotate=330.00, anchor=north] {$H\mid1$} (v1.325.00);
\path[->] (v1.-85.00) edge node[rotate=90.00, anchor=north] {$-H-1\mid\bar1$} (v2.85.00);
\path[->] (v1.-25.00) edge node[rotate=-30.00, anchor=south] {$(-L\cup L)-1\mid0$} (v0.145.00);
\path[->] (v1) edge[loop above] node {$H-1\mid1$} ();
\path[->] (v3.5.00) edge node[rotate=0.00, anchor=south] {$L\mid00$, $-L-1\mid01$} (v0.175.00);
\path[->] (v3.125.00) edge node[rotate=-60.00, anchor=north]
{$H_{q}\mid11$, $\{q\}\mid10$} (v1.-65.00);
\path[->] (v3.-115.00) edge node[rotate=60.00, anchor=north] {$-H\mid\bar10$} (v2.55.00);
\path[->] (v0.185.00) edge node[rotate=0.00, anchor=north] {$\qh\mid -$} (v3.-5.00);
\path[->] (v1.-55.00) edge node[rotate=-60.00, anchor=south] {$\qhm\mid -$} (v3.115.00);
\path[->] (v2.65.00) edge node[rotate=60.00, anchor=south] {$\qhp\mid -$} (v3.-125.00);
\path[->] (v4.-55.00) edge[bend right=80] node[rotate=20.00, anchor=south] {$L+1\mid0\bar1$, $-L\mid00$} (v0.-95.00);
\path[->] (v4.65.00) edge node[rotate=60.00, anchor=south] {$H\mid10$} (v1.-125.00);
\path[->] (v4.5.00) edge node[rotate=0.00, anchor=south]
{$-H_{q}\mid\bar1\bar1$, $\{-q\}\mid\bar10$} (v2.175.00);
\path[->] (v0.-85.00) edge[bend left=85] node[rotate=20.00, anchor=north]
{$\mqh\mid -$} (v4.-65.00);
\path[->] (v1.-115.00) edge node[rotate=60.00, anchor=north] {$\mqhm\mid -$} (v4.55.00);
\path[->] (v2.185.00) edge node[rotate=00.00, anchor=north] {$\mqhp\mid -$}
(v4.-5.00);
\end{tikzpicture}
\caption{Standard addition for two SSDEs.}
\label{fig:stand-add-ssde}
\end{figure}
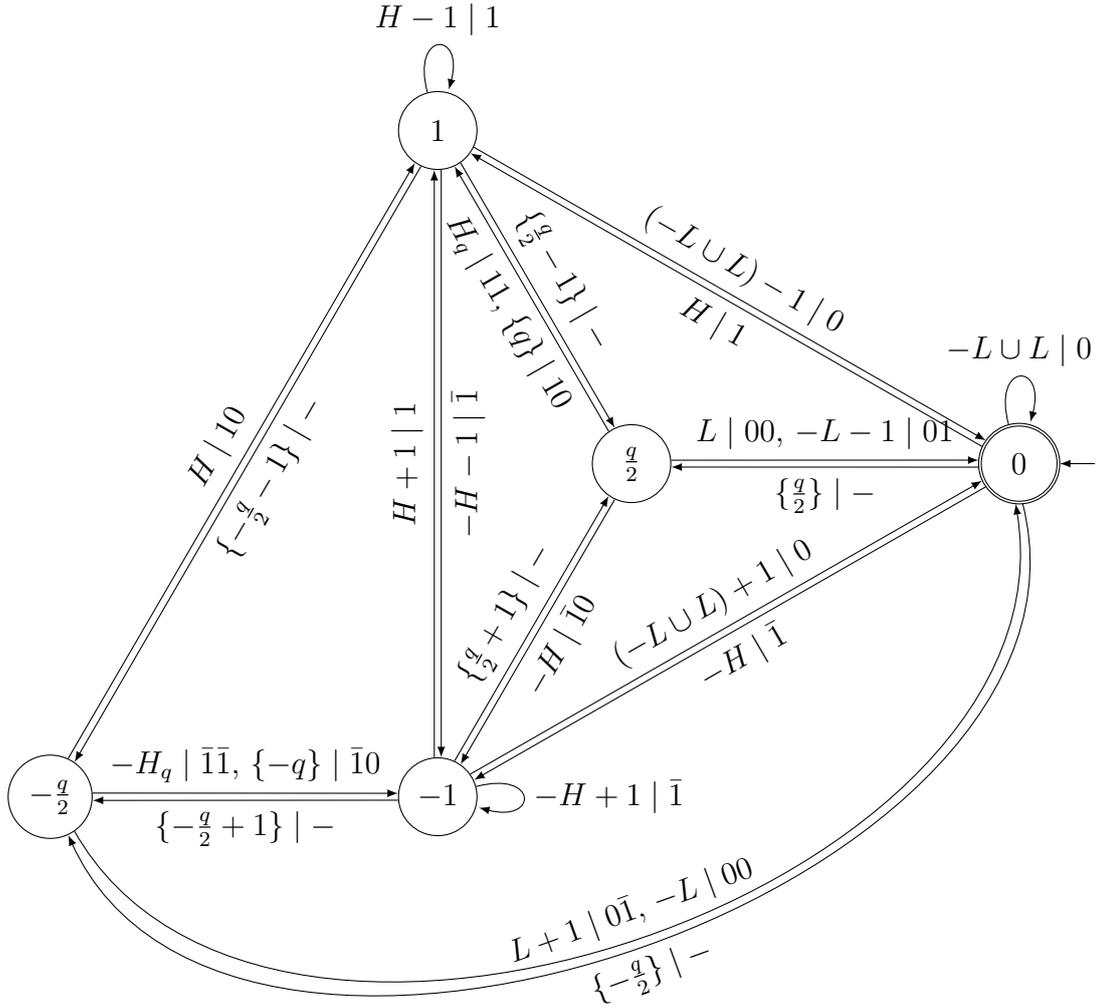

\section{Approximate Equidistribution}\label{sec:approx-equidist}
As a probabilistic input model, we want to use an equidistribution on all
digit expansions satisfying certain syntactical rules. This is easy in the
case of $(q,d)$-expansions (see Section~\ref{sec:weights-q-d}) because there
are no syntactical rules. But in the case of a general regular language, like the
SSDE, we can only approximate an equidistribution by
Lemma~\ref{lemma:conditional-probabilities}. However, this approximation does
not influence the main terms of the results. 

A regular language is recognized by an automaton. An automaton is defined to
consist of states, transitions between these states with labels, an initial
state and final states. So to say, it is a transducer without output. The
automaton recognizes a word from a language, if there exists a path starting
at the initial state, leading to a final state with this word as the label. 

We call an automaton
aperiodic if its underlying directed graph is aperiodic, i.e., the greatest common
divisor of all lengths of directed cycles of the graph is $1$. If the underlying
directed graph
is strongly connected, then the automaton is so, too. If an automaton is
strongly connected and aperiodic, then the adjacency matrix of the underlying
graph is primitive.

Given an automaton $\calA$ for a regular language, we automatically construct 
transition probabilities between the states to obtain an approximate equidistribution on
all words of given length $\ell$. The weight of the word is the product of the
transition probabilities multiplied with an \emph{exit weight} (the factor in front of
the product in \eqref{eq:weight-of-a-path} below). This corresponds to an
approximate equidistribution on
all paths of length $\ell$ of the underlying graph of the automaton starting
in the initial state. Without
the exit weights, these transition probabilities are the same as defined by Shannon in
\cite{Shannon:1948:mathem-theor-commun} and Parry in \cite{Parry:1964:intrin-markov}.
We implemented the computations of this lemma as part of
SageMath~\cite{SageMath:2016:7.0} as \texttt{Automaton.shannon\_parry\_markov\_chain}.

\begin{lemma}\label{lemma:conditional-probabilities}
  Let $\calA$ be a deterministic automaton with set of states $\{1,\ldots, n\}$,
  initial state $1$, final states $\emptyset\neq F\subseteq \{1,\ldots, n\}$ recognizing a
  regular language $\calL$. 
  We assume that the adjacency matrix $A$ of the underlying graph of $\calA$ is primitive.

  The dominant eigenvalue of $A$ is denoted by $\lambda$, all other eigenvalues
  of $A$ are assumed to be of modulus less than or equal to $\xi\lambda$ for
  some $0<\xi<1$. If there are eigenvalues of modulus $\xi\lambda$,
  then each of them must be semisimple, i.e., its algebraic and geometric
  multiplicities coincide.

  Let $w>0$ and $u>0$ be right and left eigenvectors of $A$ for the eigenvalue $\lambda$,
  respectively, such that $w_1=1$ and $\langle u,w\rangle =1$.

  For a transition $t$ from some state $i$ to some state $j$, we set 
  \begin{equation}\label{eq:p_t_definition}
    p_{t}=\frac{w_j}{w_i \lambda}.
  \end{equation}

  For $\ell\ge 0$, the set of words of $\calL$ of length $\ell$ is denoted by $\calL_\ell$.
  For a word $x\in\calL_\ell$, we denote the states and
  transitions used when $\calA$ reads $x$ by $1=s_0$, \ldots, $s_\ell$ and 
  $t_1$, \ldots, $t_\ell$, respectively. The weight $W_{\ell}(x)$ of $x$ is then defined to be
  \begin{equation}\label{eq:weight-of-a-path}
    W_{\ell}(x)=\frac{1}{w_{s_\ell} \langle u, e_F\rangle}\prod_{j=1}^{\ell} p_{t_j}
  \end{equation}
  where $e_F$ is the indicator vector of the set $F$ of final states.

  Then 
  \begin{equation}\label{eq:stochastic-matrix}
    \sum_{t\text{ leaves }i}p_{t}=1
  \end{equation}
  holds for all states $i$ and
  \begin{equation}\label{eq:transition-probabilities-quasi-equidistribution}
    W_{\ell}(x)=\frac1{|\calL_\ell|}(1+O(\xi^\ell))
  \end{equation}
  holds uniformly for $\ell\ge 0$ and $x\in \calL_\ell$.

  Furthermore, consider the time-homogeneous Markov chain $\calM$ on the state space $\{1,\allowbreak
  \ldots,\allowbreak n\}$ where the transition probability from state $i$ to state $j$
  is $\sum_{t}p_t$ where the sum runs over all transitions in $\calA$ from $i$
  to $j$. Then this Markov chain has the stationary distribution
  \begin{equation}\label{eq:stationary-distribution}
    (u_1w_1, \ldots, u_nw_n).
  \end{equation}
\end{lemma}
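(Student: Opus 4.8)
The plan is to verify the three conclusions in turn, since each follows from the Perron--Frobenius structure of the primitive matrix $A$ together with the explicit definitions of $p_t$ and $W_\ell$.

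First I would establish the stochasticity relation \eqref{eq:stochastic-matrix}. Fix a state $i$. Summing the transition probabilities leaving $i$ gives
\begin{equation*}
  \sum_{t\text{ leaves }i}p_{t}=\sum_{t\text{ leaves }i}\frac{w_j}{w_i\lambda}=\frac{1}{w_i\lambda}\sum_{j}A_{ij}w_j,
\end{equation*}
where the last sum counts each transition from $i$ to $j$ with its correct multiplicity, so that $\sum_j A_{ij}w_j=(Aw)_i=\lambda w_i$ because $w$ is a right eigenvector for $\lambda$. Hence the sum equals $1$. This is the easy part and needs only the eigenvector equation $Aw=\lambda w$ and the positivity of $w$ ensured by primitivity.

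Next I would treat the approximate equidistribution \eqref{eq:transition-probabilities-quasi-equidistribution}, which I expect to be the main obstacle. The key observation is that the product $\prod_{j=1}^{\ell}p_{t_j}$ telescopes: writing $s_0=1,\ldots,s_\ell$ for the visited states and using $w_{s_0}=w_1=1$, we get
\begin{equation*}
  \prod_{j=1}^{\ell}p_{t_j}=\prod_{j=1}^{\ell}\frac{w_{s_j}}{w_{s_{j-1}}\lambda}=\frac{w_{s_\ell}}{w_1\lambda^{\ell}}=\frac{w_{s_\ell}}{\lambda^{\ell}},
\end{equation*}
so that $W_\ell(x)=\frac{1}{\lambda^{\ell}\langle u,e_F\rangle}$, a quantity depending only on $\ell$ and not on the individual word $x$. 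It therefore remains to compare this with $1/|\calL_\ell|$. Now $|\calL_\ell|$ is the number of accepted paths of length $\ell$ from state $1$, which equals $\sum_{j\in F}(A^{\ell})_{1j}=e_1^{\top}A^{\ell}e_F$. The spectral analysis of the primitive matrix $A$ gives $A^{\ell}=\lambda^{\ell}(wu^{\top})+O((\xi\lambda)^{\ell})$ uniformly, where the semisimplicity hypothesis on the eigenvalues of modulus $\xi\lambda$ guarantees that the subdominant contribution has no polynomial-in-$\ell$ factor and stays within the stated $O((\xi\lambda)^\ell)$ bound. Using $w_1=1$ and $\langle u,w\rangle=1$ one finds
\begin{equation*}
  |\calL_\ell|=\lambda^{\ell}\, w_1\langle u,e_F\rangle\bigl(1+O(\xi^{\ell})\bigr)=\lambda^{\ell}\langle u,e_F\rangle\bigl(1+O(\xi^{\ell})\bigr),
\end{equation*}
whence $1/|\calL_\ell|=W_\ell(x)\bigl(1+O(\xi^{\ell})\bigr)$, which rearranges to \eqref{eq:transition-probabilities-quasi-equidistribution}. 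The delicate points here are the uniformity of the error term over all $x\in\calL_\ell$ (which is automatic once we see $W_\ell(x)$ is independent of $x$) and the careful control of the spectral remainder, for which the semisimplicity assumption is exactly what is needed to exclude $\ell^{k}(\xi\lambda)^{\ell}$ growth.

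Finally I would check the stationary distribution \eqref{eq:stationary-distribution}. Let $\pi_i=u_iw_i$ and let $P_{ij}=\sum_{t\text{ from }i\text{ to }j}p_t=A_{ij}w_j/(w_i\lambda)$ be the entries of the Markov transition matrix $\calM$. I compute
\begin{equation*}
  (\pi P)_j=\sum_i u_iw_i\cdot\frac{A_{ij}w_j}{w_i\lambda}=\frac{w_j}{\lambda}\sum_i u_iA_{ij}=\frac{w_j}{\lambda}(u^{\top}A)_j=\frac{w_j}{\lambda}\lambda u_j=u_jw_j=\pi_j,
\end{equation*}
using that $u$ is a left eigenvector, $u^{\top}A=\lambda u^{\top}$. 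Thus $\pi P=\pi$, and $\pi$ is a probability vector because its entries are positive and $\sum_i u_iw_i=\langle u,w\rangle=1$; this proves $\pi$ is the stationary distribution. This last verification is again routine given the eigenvector relations.
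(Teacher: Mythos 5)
Your proposal is correct and follows essentially the same route as the paper's proof: the same telescoping of $\prod p_{t_j}$ to show $W_\ell(x)=1/(\lambda^\ell\langle u,e_F\rangle)$, the same spectral estimate $|\calL_\ell|=e_1^\top A^\ell e_F=\langle u,e_F\rangle\lambda^\ell(1+O(\xi^\ell))$, and the same eigenvector computations for stochasticity and for $\pi P=\pi$. The only difference is cosmetic (componentwise computation instead of the paper's $\diag$-conjugation of $A$, and the paper additionally invokes aperiodicity and irreducibility to pin down uniqueness of the stationary distribution).
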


For large $\ell$ and a transition $t$ from some state $i$ to some state $j$,
$p_t$ can be thought as the probability of using $t$ under the condition that
the automaton is currently in state $i$. Note that the sum in
\eqref{eq:stochastic-matrix} runs over all transitions leaving $i$ such that
multiple transitions between $i$ and $j$ are counted separately although their
individual weights $p_t$ only depend on $i$ and $j$. It turns out that the exit
weights do not influence
the main term of our asymptotic expressions. 

\begin{proof}[Proof of Lemma~\ref{lemma:conditional-probabilities}]
  We first note that the cardinality $|\calL_\ell|$ is given by
  \begin{equation*}
    |\calL_\ell|=e_1^\top A^\ell e_F = \langle e_1,w\rangle \langle
    u,e_F\rangle \lambda^{\ell}(1+O(\xi^\ell))=\langle
    u,e_F\rangle \lambda^{\ell}(1+O(\xi^\ell))
  \end{equation*}
  where $e_1=(1,0,\ldots, 0)$.

  For $x\in\calL_\ell$ with associated sequence of states $(s_0,\ldots,
  s_\ell)$, we have
  \begin{equation*}
    W_{\ell}(x)=\frac{1}{w_{s_\ell} \langle u, e_F\rangle}\prod_{j=1}^{\ell} 
    \frac{w_{s_j}}{w_{s_{j-1}} \lambda}
    =\frac{1}{w_{s_0} \langle u, e_F\rangle \lambda^\ell}.
  \end{equation*}
  As $w_{s_0}=w_1=1$, we get
  \eqref{eq:transition-probabilities-quasi-equidistribution}.

  Next, we prove \eqref{eq:stochastic-matrix} by rewriting the sum as
  \begin{equation*}
    \sum_{t\text{ leaves }i}p_{t}=\sum_{j=1}^{n} a_{ij}\frac{w_j}{w_i \lambda}=1
  \end{equation*}
  by definition of $w$.

  Finally, the transition matrix of the Markov chain $\calM$ is
  \begin{equation*}
    P=\Bigl(a_{ij}\frac{w_j}{w_i\lambda}\Bigr)_{1\le i,j\le n}
  \end{equation*}
  by definition of the Markov chain and \eqref{eq:p_t_definition}. Thus
  \begin{equation*}
    P=\frac1{\lambda}\diag\Bigl(\frac{1}{w_1}, \ldots, \frac{1}{w_n}\Bigr) A
    \diag(w_1, \ldots, w_n).
  \end{equation*}
  As
  \begin{align*}
    (u_1w_1, \ldots, u_n w_n)P&=
    \frac{1}{\lambda}(u_1, \ldots, u_n)A\diag(w_1, \ldots, w_n)\\
    &=
    (u_1, \ldots, u_n)\diag(w_1, \ldots, w_n)\\
    &=(u_1w_1, \ldots, u_n w_n),
  \end{align*}
  $(u_1w_1, \ldots u_n w_n)$ is a left eigenvector of $P$ to the eigenvalue
  $1$. By definition of $u$ and $w$, $\sum_{i=1}^n u_iw_i=1$.
  As $\calM$ is aperiodic and irreducible, $(u_1w_1, \ldots, u_nw_n)$ is the
  unique left eigenvector with this property and therefore the stationary distribution.
\end{proof}
The weight $W_{\ell}$ induces a measure
on the words of length $\ell$. The total measure of all words of
length $\ell$ is $1$ up to an exponentially small
error, thus it is a probability measure up to an exponentially small error. Each word has exactly the same
weight. If we see the transition probabilities as a part of the automaton, we
obtain a probabilistic automaton:

\begin{definition}
A probabilistic automaton is an automaton together with a map $p\colon t\mapsto
p_t$ from the set of transitions
to the interval $[0,1]$ such that
\begin{equation*}
  \sum_{t\text{ leaves }s}p_{t}=1
\end{equation*}
holds for all states $s$.
 We call $p_t$ the weight or the probability of the transition $t$. 
\end{definition}

\subsection{Weights for $(q,d)$-expansions}\label{sec:weights-q-d}
We can use Lemma~\ref{lemma:conditional-probabilities} in this case, too, but the digits of a $(q,d)$-expansion are independent of each other because there
are no syntactical rules involving more than one digit. Therefore we can directly
obtain equidistribution, not only approximating it. We first describe the
direct way and later, in Remark~\ref{rem:q-d-weight-via-lemma}, we consider
using Lemma~\ref{lemma:conditional-probabilities}.

For any digit
$x_{0}\in D$, we use the weights $W_{\ell}(x_{0})=\frac 1q$. The exit weight
is $1$.
By independence, we have the weight
\begin{equation*}
W_{\ell}(x)=\frac{1}{q^{\ell}}
\end{equation*}
for a
digit expansion $x$ of length $\ell$.
With this weight, we have an equidistribution of all $(q,d)$-expansions of
length $\ell$.
\begin{figure}
  \centering
  \begin{tikzpicture}[auto, initial text=, >=latex]
\node[state, accepting, initial, initial where=right] (v0) at (0.000000, 0.000000) {$0$};
\path[->] (v0) edge[loop above] node {$d,\ldots,q+d-1$} ();
\end{tikzpicture}
  \caption{Automaton recognizing $(q,d)$-expansions.}
  \label{aut:q-d-exp}
\end{figure}
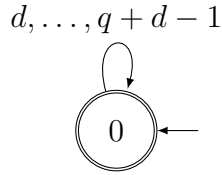

\begin{remark}\label{rem:q-d-weight-via-lemma}
  The same weights can be obtained by
  Lemma~\ref{lemma:conditional-probabilities}. The transition probabilities
  are $p_{0\rightarrow0}=q^{-1}$. As the automaton recognizing
  $(q,d)$-expansions has only one state (see Figure~\ref{aut:q-d-exp}), there is no error term in~\eqref{eq:transition-probabilities-quasi-equidistribution}.
\end{remark}

\subsection{Weights for SSDEs}\label{sec:weights-ssde}
The automaton in Figure~\ref{aut:ssde} recognizes SSDEs. The adjacency matrix
of this automaton is
\begin{equation*}
  A=\begin{pmatrix}0&\frac q2&0\\1&q-1&1\\0&\frac q2&0\end{pmatrix}
\end{equation*}
where the states are ordered by their labels.

The matrix $A$ has the eigenvalues $q$, $-1$ and $0$. The vectors $(\frac
1{q+1},\frac q{q+1},\frac 1{q+1})$
and $(\frac 12,1,\frac 12)^{\top}$ are the left and right
eigenvector corresponding to the eigenvalue $q$, respectively. The transition
probabilities are
\begin{alignat}{6}\label{eq:trans-prob}
  p_{-1\rightarrow0}&=p_{1\rightarrow 0}=\frac 2q,&\quad p_{0\rightarrow 1}&=p_{0\rightarrow-1}=\frac 1{2q},&\quad
  p_{0\rightarrow0}&=\frac 1q.
\end{alignat}
The constant in the error term is $\xi=\frac 1q$. The exit weights are $(2,1,2)\cdot \frac{q+1}{q+2}$.
\begin{figure}
  \centering
  \begin{tikzpicture}[auto, initial text=, >=latex]
    \node[state, initial, accepting, initial where=below] (v1) at (0, 0) {$0$};
    \node[state, accepting] (v2) at (4, 0) {$1$};
    \node[state, accepting] (v0) at (-4, 0) {$-1$};
    \path[->] (v1) edge[loop above] node {$-\frac q2+1,\ldots,\frac q2-1$} ();
    \path[->] (v0.5.00) edge node {$-\frac q2+1,\ldots,0$} (v1.175);
    \path[->] (v1.5) edge node {$\frac q2$} (v2.175);
    \path[->] (v2.185) edge node {$0,\ldots,\frac q2-1$} (v1.-5);
    \path[->] (v1.185) edge node {$-\frac q2$} (v0.-5);
  \end{tikzpicture}
  \caption{Automaton recognizing SSDEs.}
  \label{aut:ssde}
\end{figure}
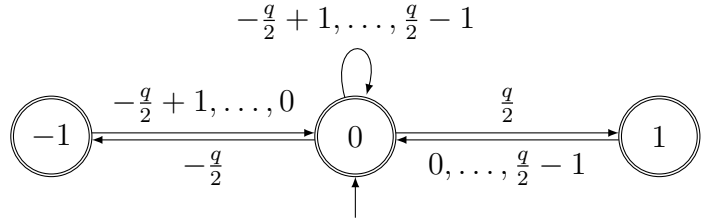

With these transition probabilities, the asymptotic frequencies of the digits
(cf.\ \cite{Heuberger-Prodinger:2001:minim-expan})
can be computed as
\begin{align}\label{eq:weights-digits-ssde}
  \begin{cases}
    \frac{1}{2(q+1)}&\text{if the digit is }\pm\frac{q}{2},\\
    \frac{q+2}{q(q+1)}&\text{if the digit is }0,\\
    \frac 1q&\text{otherwise}
  \end{cases}
\end{align}
by using the stationary distribution given in~\eqref{eq:stationary-distribution}.

\section{Asymptotic Analysis of the Standard Addition}\label{sec:asympt-analys-stand}
In this section, we use the probabilistic model defined in
Section~\ref{sec:approx-equidist} for the input sequence of the transducers in
Section~\ref{sec:automata}. Then we will use Lemma~\ref{lem:asy-analy} to obtain
expectation, variance and asymptotic normality of the number of
carries.

In Sections~\ref{sec:standard-addition-q-d} and \ref{sec:stand-addit-ssde}, we
will construct probabilistic automata whose transition labels are the carries and where
each transition has a weight corresponding to the weight constructed in
Section~\ref{sec:approx-equidist}.

Let $m$ and $n$ be two functions mapping the output of a transition into the
real numbers; for brevity we write $m(t)$ and $n(t)$ without mentioning the
output label of the transition $t$. In our setting $m$
  will count the number of carries $1$, and $n$ the number of carries $-1$
  of the output of a transition. We consider the two random variables
  $M_{\ell}$ and $N_{\ell}$ which are the sum of the values of $m$ and
  $n$, respectively, over a path of length $\ell$ with probability the product of the weights
  of this path multiplied with the exit weight.

  We will use multivariate generating functions in three variables $x$, $y$ and
  $z$. The variables $x$ and $y$ mark the number of carries $1$ and $-1$,
  respectively, and the variable $z$  marks the length of the expansion.

  The transition matrix $A(x,y)$ of a probabilistic automaton with $K$ states and two
  functions $m$ and $n$ is a $K\times K$
  matrix whose $(i,j)$-th entry is 
  \begin{equation*}
    \sum_{t\colon i\rightarrow j}p_{t}x^{m(t)}y^{n(t)}
  \end{equation*}
  where $p_{t}$ is the weight of the transition $t$. 

The next lemma is a slight modification of
\cite[Theorem~3.9]{Heuberger-Kropf-Wagner:2014:combin-charac} taking into
account the non-uniform distribution of the input alphabet.
\begin{lemma}\label{lem:asy-analy}
  Let $\mathcal A$ be a strongly connected, aperiodic probabilistic automaton where
  all states are final. Let $m$ and $n$ be functions mapping the output of a
  transition into the real numbers and $A(x,y)$ be the associated transition matrix of the automaton,
  where $x$ and $y$ mark $m$ and $n$, respectively. Let $M_\ell$ and $N_\ell$
  be the associated random variables as defined above.

  Define the function $f(x,y,z)=\det (I-zA(x,y))$.  Then the expected value of $(M_{\ell},N_{\ell})$ is $(e_{m},
  e_{n})\ell+\bigOh(1)$ with
  \begin{align*}
    e_{m}&=\left.\frac{f_{x}}{f_{z}}\right\vert_{(1,1,1)},\\
    e_{n}&=\left.\frac{f_{y}}{f_{z}}\right\vert_{(1,1,1)}.
  \end{align*}
  The variance-covariance matrix  $\big(\begin{smallmatrix}v_{m}&c\\c&v_{n}\end{smallmatrix}\big)\ell+\bigOh(1)$ has the entries
  \begin{align}\label{eq:var-std-add}
    v_{m}=\left.\frac{f_{x}^{2}(f_{zz}+f_{z})+f_{z}^{2}(f_{xx}+f_{x})-2f_{x}f_{z}f_{xz}}{f_{z}^{3}}\right\vert_{(1,1,1)},\\
    v_{n}=\left.\frac{f_{y}^{2}(f_{zz}+f_{z})+f_{z}^{2}(f_{yy}+f_{y})-2f_{y}f_{z}f_{yz}}{f_{z}^{3}}\right\vert_{(1,1,1)},\\
    c=\left.\frac{f_{x}f_{y}(f_{zz}+f_{z})+f_{z}^{2}f_{xy}-f_{y}f_{z}f_{xz}-f_{x}f_{z}f_{yz}}{f_{z}^{3}}\right\vert_{{(1,1,1)}}.
  \end{align}
  Furthermore, if $v_{m}$ and $v_{n}$ are non-zero,
  then $M_{\ell}$ and $N_{\ell}$ are
  asymptotically normally distributed, respectively. If the
  variance-covariance matrix is non-singular, then $M_{\ell}$ and $N_{\ell}$ are asymptotically jointly normally distributed.
\end{lemma}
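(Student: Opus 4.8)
The plan is to pass to the probability generating function of $(M_\ell, N_\ell)$, locate its singularities in $z$ via the Perron--Frobenius eigenvalue of $A(x,y)$, and then invoke a two-dimensional quasi-power theorem to read off the mean, the variance--covariance matrix and the individual and joint normality.

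First I would express the joint probability generating function as a transfer-matrix product. Since all states are final and a path of length $\ell$ contributes the product of its transition weights times the exit weight of its terminal state,
$$\Expect\bigl(x^{M_\ell}y^{N_\ell}\bigr)=e_1^\top A(x,y)^\ell\,b,$$
where $e_1$ is the indicator of the initial state and $b$ is the vector of exit weights. Summing over $\ell$ yields the trivariate generating function $F(x,y,z)=e_1^\top(I-zA(x,y))^{-1}b$, which by Cramer's rule is a rational function whose denominator is $f(x,y,z)=\det(I-zA(x,y))$.

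Next I would identify the dominant singularity. At $(x,y)=(1,1)$ the matrix $A(1,1)$ is row-stochastic (the defining property of a probabilistic automaton) and primitive (strongly connected and aperiodic), hence by Perron--Frobenius it has a simple dominant eigenvalue $1$, separated from the remaining spectrum by a gap. Perturbation theory then provides a simple eigenvalue $\lambda(x,y)$, analytic on a complex neighborhood of $(1,1)$ with $\lambda(1,1)=1$ and strictly dominant there, so that $F$ has a simple pole at $z=\rho(x,y)=1/\lambda(x,y)$ while all further singularities are bounded away by a factor $\eta<1$. Extracting coefficients by the residue at this pole gives, uniformly for $(x,y)$ near $(1,1)$,
$$\Expect\bigl(x^{M_\ell}y^{N_\ell}\bigr)=C(x,y)\,\lambda(x,y)^\ell\bigl(1+\bigOh(\eta^\ell)\bigr)$$
with $C(1,1)>0$. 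Dividing by the total mass $\Expect(1)=e_1^\top A(1,1)^\ell b$, which is of the same shape, turns this into the quasi-power form $\widetilde C(x,y)\lambda(x,y)^\ell(1+\bigOh(\eta^\ell))$ with $\widetilde C(1,1)=1$; the two-dimensional quasi-power theorem underlying \cite[Theorem~3.9]{Heuberger-Kropf-Wagner:2014:combin-charac} then yields the linear growth of the mean and of the variance--covariance matrix, asymptotic normality of $M_\ell$ and of $N_\ell$ when the respective variances are positive, and joint normality when the matrix is nonsingular.

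Finally I would identify the constants. They depend only on $\lambda(x,y)$, hence only on $A(x,y)$: the initial vector $e_1$ and the exit weights $b$ enter solely through $\widetilde C$ and thus affect only the $\bigOh(1)$ terms. This is the single place where the statement departs from \cite[Theorem~3.9]{Heuberger-Kropf-Wagner:2014:combin-charac}, whose uniform input model is here replaced by the non-uniform transition weights. Since $f(x,y,\rho(x,y))=0$ along the singularity curve, differentiating this identity implicitly and using $\rho=1/\lambda$ together with $\lambda(1,1)=1$ converts derivatives of $\lambda$ into the quotients of partial derivatives of $f$ in the statement: one differentiation gives $\lambda_x=f_x/f_z$ and $\lambda_y=f_y/f_z$ at $(1,1,1)$, whence $e_m,e_n$; a second differentiation, combined with the quasi-power expressions $v_m=\lambda_{xx}+\lambda_x-\lambda_x^2$ (and analogously for $v_n$ and $c$), produces the stated second-order constants. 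I expect the main obstacle to be the uniformity of the quasi-power estimate — ensuring that the spectral gap, the analyticity of $\lambda(x,y)$ and the error $\bigOh(\eta^\ell)$ all persist uniformly on a fixed complex neighborhood of $(1,1)$ — whereas the ensuing implicit differentiations are routine though lengthy.
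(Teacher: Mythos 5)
Your proposal is correct and follows essentially the same route as the paper: the paper also writes the (moment) generating function as $e_1^\top(I-zA)^{-1}w_F$ with the exit-weight vector, observes that the probabilistic, aperiodic automaton forces the dominant eigenvalue of $A(1,1)$ to be $1$, and then invokes the quasi-power machinery of \cite[Theorem~3.9]{Heuberger-Kropf-Wagner:2014:combin-charac} (with ``complete'' replaced by ``probabilistic'') to get the same formulas. You merely spell out the Perron--Frobenius/singularity-perturbation and implicit-differentiation steps that the paper delegates to that reference, and your resulting constants (e.g.\ $v_m=\lambda_{xx}+\lambda_x-\lambda_x^2$ translated via $f(x,y,1/\lambda)=0$) agree with \eqref{eq:var-std-add}.
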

\begin{proof}The moment generating
  function is
  \begin{equation*}
    \mathbb E\exp(s_{1}M_{\ell}+s_{2}N_{\ell})=[z^{\ell}]e_{1}^{\top}(I-zA(e^{s_{1}},e^{s_{2}}))^{-1}w_{F}
  \end{equation*}
  where $e_{1}$ is a unit vector with a $1$ at the position of the initial
  state and the entries of $w_{F}$ are the exit weights of the states. Since the
  automaton is probabilistic and aperiodic, the unique dominant eigenvalue of
  $A(1,1)$ is $1$. Thus the same arguments apply as in
  \cite{Heuberger-Kropf-Wagner:2014:combin-charac} after replacing
  ``complete'' by ``probabilistic''. We obtain the same
  formulas for the constants of the expectation, the variance and the
  covariance. Also the central limit theorem follows. 
\end{proof}

\subsection{Standard Addition for
  $(q,d)$-expansions}\label{sec:standard-addition-q-d}
To construct the probabilistic automaton, we start with the transducer in
Figure~\ref{fig:stand-add-q-d}, and use the weights from
Section~\ref{sec:weights-q-d}.

All steps in this section, including the computation of the constants in
Theorem~\ref{thm:q-d-stand}, can be done in the mathematical software system
SageMath \cite{SageMath:2016:7.0} by using the included finite state
machine package described in \cite{Heuberger-Krenn-Kropf:ta:finit-state}. The
corresponding SageMath file is available at \cite{Heuberger-Kropf-Prodinger:2015:analy-carries-online}.

The construction in this section is more general than needed for the case of
independent digits as in $(q,d)$-expansions. But discussing it here in full
generality allows reusing the same ideas for the case of dependent digits as
in SSDEs
later on. We will use
the same construction for SSDEs in Sections~\ref{sec:stand-addit-ssde} and~\ref{sec:asympt-analys-von}.

In this section, let $\calA$ be the automaton in Figure~\ref{aut:q-d-exp}, equipped with the
weight $\frac 1q$ for every transition and the exit weight $1$ for every state (by
Section~\ref{sec:weights-q-d}). Construct $\calA^{2}$ as the \emph{additive
  Cartesian product}\footnote{This can also be seen as the composition of a
  transducer performing digitwise addition (without considering any carries)
  and the Cartesian product of $\calA$ with itself. This corresponds to the
  SageMath methods
  \href{http://www.sagemath.org/doc/reference/combinat/sage/combinat/finite_state_machine_generators.html\#sage.combinat.finite_state_machine_generators.TransducerGenerators.add}{\texttt{transducers.add}}
  and
  \href{http://www.sagemath.org/doc/reference/combinat/sage/combinat/finite_state_machine.html\#sage.combinat.finite_state_machine.Transducer.cartesian_product}{\texttt{Transducer.cartesian\_product}},
  respectively. The composition can be computed by the SageMath method \href{http://www.sagemath.org/doc/reference/combinat/sage/combinat/finite_state_machine.html\#sage.combinat.finite_state_machine.FiniteStateMachine.composition}{\texttt{Transducer.composition}}.} of $\calA$ with itself by the following rules:
\begin{itemize}
\item The states of $\calA^{2}$ are pairs of states of $\calA$.
\item There is a transition from $(a,b)$ to $(c,d)$ with label $x+y$ in $\calA^{2}$ if there
  are transitions from $a$ to $c$ with label $x$ and $b$ to $d$ with label $y$
  in $\calA$.
\item The weight of a transition in $\calA^{2}$ is the product of the weights
  of the two transitions in $\calA$.
\item The exit weight of a state in $\calA^{2}$ is the product of the exit
  weights of the two states in $\calA$.
\end{itemize}
The probabilistic automaton $\calA^{2}$ recognizes all possible sequences $\bfs$ of digitwise sums with the correct weights for the
equidistribution on the independent $(q,d)$-expansions $\bfx$ and
$\bfy$. 

In this section, let $\calB$ be the transducer in
Figure~\ref{fig:stand-add-q-d} performing the standard addition of two
$(q,d)$-expansions. Next, we construct $\calS_{(q,d)}$ as the
composition
$\calB\circ\calA^{2}$ by the following rules:
\begin{itemize}
\item The states of $\calS_{(q,d)}$ are pairs of states of $\calB$ and
  $\calA^{2}$.
\item For each pair of transitions from $a$ to $c$ with input label $s$ and
  output label $k$ in $\calB$ and from $b$ to $d$ with weight $w$ and
  label $s$ in $\calA^{2}$, there is a transition from $(a,b)$ to $(c,d)$ with
  weight $w$ and label $k$ in $\calS_{(q,d)}$.
\item The exit weight of a state in $\calS_{(q,d)}$ is the exit weight of the
  corresponding state in $\calA^{2}$.
\end{itemize}
The probabilistic automaton $\calS_{(q,d)}$ recognizes the sequence of carries
$\bfc$  with the correct weights for the
equidistribution on the independent $(q,d)$-expansions $\bfx$ and
$\bfy$. The probabilistic automaton $\calS_{(q,d)}$ has three states.

To determine the transition matrix of $\calS_{(q,d)}$, we use the following
lemma to compute the number of transitions between two states. The lemma
is proved by an inclusion-exclusion argument.
\begin{lemma}\label{lem:recursion-N}
  Let
  \begin{multline*}
N(x_{\min},x_{\max},y_{\min},y_{\max},s_{\min},s_{\max})=\\
\lvert\{(x,y)\in
\mathbb Z^{2}\mid x_{\min}\leq x\leq x_{\max}, y_{\min}\leq y\leq y_{\max},
s_{\min}\leq x+y\leq s_{\max}\}\rvert.
\end{multline*}
Then we have
{\allowdisplaybreaks
\begin{align*}
N(x_{\min},x_{\max},y_{\min},y_{\max},s_{\min},s_{\max})&=N(0,\infty,0,\infty,0,s_{\max}-x_{\min}-y_{\min}) \\
&\quad-N(0,\infty,0,\infty,0,s_{\max}-x_{\min}-y_{\max}-1)\\
&\quad-N(0,\infty,0,\infty,0,s_{\max}-x_{\max}-y_{\min}-1)\\
&\quad+N(0,\infty,0,\infty,0,s_{\max}-x_{\max}-y_{\max}-2)\\
&\quad-N(0,\infty,0,\infty,0,s_{\min}-x_{\min}-y_{\min}-1)\\
&\quad+N(0,\infty,0,\infty,0,s_{\min}-x_{\min}-y_{\max}-2)\\
&\quad+N(0,\infty,0,\infty,0,s_{\min}-x_{\max}-y_{\min}-2)\\
&\quad-N(0,\infty,0,\infty,0,s_{\min}-x_{\max}-y_{\max}-3)
\end{align*}}
with $N(0,\infty,0,\infty,0,s_{\max})=0$ if $s_{\max}$ is negative and
\begin{equation*}
  N(0,\infty,0,\infty,0,s_{\max})=\frac 12(s_{\max}+2)(s_{\max}+1)
\end{equation*}
otherwise.
\end{lemma}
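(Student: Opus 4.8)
The plan is to verify the two displayed formulas separately and then assemble the main identity by a single inclusion--exclusion over the three pairs of bounds. Throughout, abbreviate $f(s)=N(0,\infty,0,\infty,0,s)$, the number of lattice points $(x,y)$ with $x\geq 0$, $y\geq 0$ and $0\leq x+y\leq s$.

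First I would establish the closed form for $f$. If $s<0$ the defining region is empty, so $f(s)=0$. If $s\geq 0$, I group the admissible pairs by the value $k=x+y\in\{0,\ldots,s\}$; for each such $k$ there are exactly $k+1$ pairs $(x,y)$ with $x,y\geq 0$ and $x+y=k$, whence $f(s)=\sum_{k=0}^{s}(k+1)=\tfrac12(s+1)(s+2)$, as claimed. Next I would record the \emph{translation step}: for integers $a,b,c$, the number of $(x,y)\in\mathbb Z^2$ with $x\geq a$, $y\geq b$ and $x+y\leq c$ equals $f(c-a-b)$. This follows from the bijection $(x,y)\mapsto(x-a,y-b)$ onto $\{x'\geq 0,\ y'\geq 0,\ x'+y'\leq c-a-b\}$; the point making this legitimate is that the ray conditions together with $x+y\leq c$ carve out a finite triangle, so every count involved is finite.

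The core of the argument is then inclusion--exclusion on the indicator of the admissible region. Each of the three two-sided bounds splits as a difference of one-sided conditions: $[x_{\min}\leq x\leq x_{\max}]=[x\geq x_{\min}]-[x\geq x_{\max}+1]$, the analogous identity for $y$, and $[s_{\min}\leq x+y\leq s_{\max}]=[x+y\leq s_{\max}]-[x+y\leq s_{\min}-1]$. Multiplying these three differences and summing over all $(x,y)\in\mathbb Z^2$ produces $2^3=8$ signed summands, each of which is a count of the form treated in the translation step and therefore equals $\pm f(c-a-b)$ for the appropriate $a\in\{x_{\min},x_{\max}+1\}$, $b\in\{y_{\min},y_{\max}+1\}$, $c\in\{s_{\max},s_{\min}-1\}$. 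Reading off the eight combinations, the four terms carrying $c=s_{\max}$ give precisely the first four listed summands (the offsets $-1$ and $-2$ arising from $x_{\max}+1$ and $y_{\max}+1$), while the four terms carrying $c=s_{\min}-1$ pick up an extra sign from the $s$-split and yield the last four summands (the further shift by $-1$ in $c$ producing the offsets $-1,-2,-3$). This matches the stated eight-term expression exactly.

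I expect no genuine obstacle here; the lemma is elementary. The only thing requiring care is the sign bookkeeping through the triple expansion and checking that the argument shifts line up with the stated offsets. It is worth noting that the one-sided decompositions are valid as termwise identities of Iverson brackets for \emph{all} integer parameters, so the derivation covers degenerate cases (for instance $x_{\min}>x_{\max}$, where both sides vanish) with no separate treatment.
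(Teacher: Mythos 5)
Your proof is correct and follows exactly the route the paper indicates: the paper dispatches this lemma with the single remark that it ``is proved by an inclusion--exclusion argument,'' and your expansion of the three two-sided constraints into differences of one-sided Iverson brackets, followed by translation to the corner case $N(0,\infty,0,\infty,0,\cdot)$, is precisely that argument carried out in full. The sign and offset bookkeeping in all eight terms checks out against the stated identity, and your closed form $\tfrac12(s+1)(s+2)$ for the triangle count is also verified correctly.
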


This gives the transition matrix in
Table~\ref{tab:q-d-stand} in the appendix
 where $x$ marks carries $1$ and $y$ marks carries $-1$.  For example, the entry in
the first row and column is
\begin{align*}
  \frac{(d-1)(d-2)}{2q^{2}}y=\sum_{\substack{x,y\in D\\x+y\in
      L+1}}p_{0\rightarrow0}p_{0\rightarrow0}y=\frac 1{q^{2}}N(d,q+d-1,d,q+d-1,2d,d)y
\end{align*}
because this entry corresponds to the transitions from $-1$ to $-1$ with input
label $L+1$ and output label $\bar1$ in $\calB$ and from $(0,0)$ to $(0,0)$ in $\calA^{2}$.

With the transition matrix, the next theorem follows directly from Lemma~\ref{lem:asy-analy}.

\begin{theorem}\label{thm:q-d-stand}
  Let $M_{\ell}$ and $N_{\ell}$ be the
  number of carries $1$ and $-1$, respectively, when adding two independent random
  $(q,d)$-expansions of length $\ell$. The expected value of
  $(M_{\ell},N_{\ell})$ is $(e_{1},e_{-1})\ell+\bigOh(1)$ with constants
  \begin{align*}
    e_{1}&=\frac{(q + d - 1)^2}{2(q - 1)^2},\\
    e_{-1}&=\frac{d^{2}}{2(q-1)^{2}}.
  \end{align*}
  The variance-covariance matrix of $(M_{\ell},N_{\ell})$ is
  $\big(\begin{smallmatrix}v_{1}&c\\c&v_{-1}\end{smallmatrix}\big)\ell+\bigOh(1)$
  with constants
  \begin{align*}
    v_{1}&=\frac{(q + d - 1)^2  (q^4 - 2q^3d - q^2d^2 - 4qd^2 - 2q^2 - d^2 + 2d
+ 1)}{4(q - 1)^5  (q + 1)},\\
    v_{-1}&=\frac{d^{2}(2q^4 - q^2d^2 - 4q^3 - 6q^2d - 4qd^2 + 4q^2 + 6qd - d^2
      - 4q + 2)}{4(q - 1)^5  (q + 1)},\\
    c&=\frac{d(q+d-1)(q^3d + q^2d^2 - q^3 + 3q^2d + 4qd^2 + 2q^2 - 3qd + d^2 - q - d)}{4(q - 1)^5  (q + 1)}.
  \end{align*}
  Furthermore, the number of carries $1$ and $-1$ is asymptotically jointly normally
  distributed for $d\neq 0$, $-q+1$. For $d=0$, 
  $M_\ell$ is asymptotically normally distributed and $N_\ell=0$ because the
  carry $-1$ does not occur. For $d=-q+1$, the same holds with $M_\ell$ and
  $N_\ell$ exchanged.
\end{theorem}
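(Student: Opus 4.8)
The plan is to apply Lemma~\ref{lem:asy-analy} to the probabilistic automaton $\calS_{(q,d)}$ whose transition matrix $A(x,y)$ has already been assembled (via Lemma~\ref{lem:recursion-N}) in Table~\ref{tab:q-d-stand}. Since that lemma hands us closed formulas for $e_m$, $e_n$, $v_m$, $v_n$ and $c$ as rational expressions in the partial derivatives of $f(x,y,z)=\det(I-zA(x,y))$ evaluated at $(1,1,1)$, the entire theorem reduces to a symbolic-calculus computation. First I would form the $3\times 3$ matrix $A(x,y)$ from the table, compute the characteristic-type determinant $f(x,y,z)=\det(I-zA(x,y))$ explicitly as a polynomial in $x$, $y$, $z$ (with coefficients rational in $q$ and $d$), and then evaluate $f$ together with its first and second partials $f_x,f_y,f_z,f_{xx},f_{yy},f_{zz},f_{xy},f_{xz},f_{yz}$ at the point $(1,1,1)$. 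Substituting these into the formulas of Lemma~\ref{lem:asy-analy} yields $e_1,e_{-1}$ and the claimed $v_1,v_{-1},c$; this is exactly the computation carried out in the accompanying SageMath notebook \cite{Heuberger-Kropf-Prodinger:2015:analy-carries-online}, and the existence of these constants is guaranteed by Lemma~\ref{lem:asy-analy} independently of the software.

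Next I would verify the hypotheses of Lemma~\ref{lem:asy-analy}, namely that $\calS_{(q,d)}$ is strongly connected and aperiodic. The transducer in Figure~\ref{fig:stand-add-q-d} visibly admits transitions among all three carry states $\{-1,0,1\}$, and the underlying automaton $\calA$ of Figure~\ref{aut:q-d-exp} is a single state with a loop; since $\calA^2$ inherits loops and $\calS_{(q,d)}=\calB\circ\calA^2$ collapses to the three carry states with loops at each (the labels $D\mid 0$, $L+1\mid\bar1$, $H-1\mid1$ give self-loops), strong connectedness and the presence of loops of length $1$ give aperiodicity. The one subtlety is that for the extremal values $d=0$ and $d=-q+1$ some transition sets become empty: e.g.\ for $d=0$ the set $L=\{2d,\ldots,d-1\}$ is empty, so no carry $-1$ is ever produced and the state $-1$ becomes unreachable, collapsing the automaton. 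That is precisely why the normality conclusions must be stated piecewise.

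For the distributional statements I would invoke the final clause of Lemma~\ref{lem:asy-analy}: $M_\ell$ and $N_\ell$ are jointly asymptotically normal provided the variance-covariance matrix is non-singular, and each is individually normal provided its variance is nonzero. The remaining task is therefore to check that for $d\neq 0,-q+1$ the matrix $\big(\begin{smallmatrix}v_1&c\\c&v_{-1}\end{smallmatrix}\big)$ is non-singular, i.e.\ $v_1v_{-1}-c^2\neq 0$, and that $v_1,v_{-1}>0$; and to handle the two degenerate cases separately. For $d=0$ the digit set is $\{0,\ldots,q-1\}$ (all summand digits nonnegative), so the digitwise sums lie in $\{0,\ldots,2q-2\}$, a carry $-1$ can never occur, hence $N_\ell=0$ identically, while $\calS_{(q,0)}$ restricted to the reachable states $\{0,1\}$ remains strongly connected and aperiodic so that $M_\ell$ alone is asymptotically normal; the case $d=-q+1$ follows by the symmetry $x\mapsto -x$ that interchanges the roles of the two carries, swapping $M_\ell$ and $N_\ell$.

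The main obstacle is the non-degeneracy check in the generic case: one must show $v_1>0$, $v_{-1}>0$ and $v_1v_{-1}-c^2\neq 0$ for all admissible $q\geq 2$ and integers $-q<d<0$. The explicit formulas have the common denominator $4(q-1)^5(q+1)>0$, so positivity of $v_1$ reduces to showing the numerator factor $q^4 - 2q^3d - q^2d^2 - 4qd^2 - 2q^2 - d^2 + 2d + 1$ is positive (using $d<0$, the terms $-2q^3d$ and $-q^2d^2$ etc.\ behave favorably), and similarly for $v_{-1}$; the determinant condition amounts to checking that a single explicit polynomial in $q,d$ does not vanish on the admissible integer range. I expect this to follow from elementary sign analysis of the numerators together with the strict inequalities $-q<d<0$, and it is again confirmed symbolically in \cite{Heuberger-Kropf-Prodinger:2015:analy-carries-online}.
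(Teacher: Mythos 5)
Your proposal is correct and follows essentially the same route as the paper, which simply applies Lemma~\ref{lem:asy-analy} to the transition matrix of $\calS_{(q,d)}$ from Table~\ref{tab:q-d-stand} and delegates the symbolic computation of $f(x,y,z)=\det(I-zA(x,y))$ and its partial derivatives at $(1,1,1)$ to SageMath. You are in fact somewhat more explicit than the paper (whose proof is the single sentence that the theorem ``follows directly'' from the lemma) about verifying strong connectedness and aperiodicity, the non-singularity of the variance-covariance matrix, and the degenerate cases $d=0$ and $d=-q+1$.
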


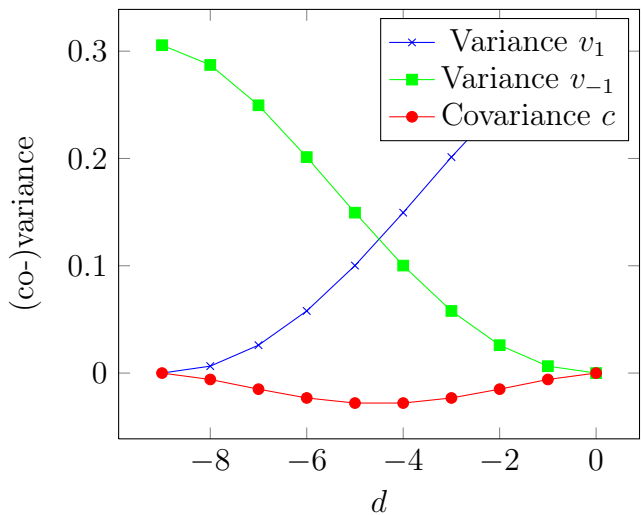
\begin{figure}
  \centering
  \begin{tikzpicture}
    \begin{axis}[
      xlabel=$d$,
      ylabel=(co-)variance]
    \addplot[color=blue,mark=x]
         table[x=d, y=varx] {table-var-standard-q-d};
    \addplot[color=green,mark=square*]
         table[x=d, y=vary] {table-var-standard-q-d};
    \addplot[color=red,mark=*]
         table[x=d, y=covar] {table-var-standard-q-d};
    \legend{Variance $v_{1}$, Variance $v_{-1}$,Covariance $c$}
    \end{axis}
  \end{tikzpicture}
  \caption{Variances and covariance for $(10,d)$-expansions of Theorem~\ref{thm:q-d-stand}.}
\end{figure}

\begin{remark}
  The expected value for carries in the addition of $(q,d)$-expansions corresponds to the result in \cite{Nakano-Sadahiro:2014:carries}. There,
  the authors find the
  stationary distribution \begin{equation*}\frac{1}{2(q-1)^{2}}(d^{2},\:
    q^{2}-2q+1-2qd+2d-2d^{2},\: (q+d-1)^{2})\end{equation*} for the states
  $(-1,0,1)$ of the carry process. For $d=\frac{-q+1}{2}$, this stationary distribution can
  also be found
  in \cite{Diaconis-Fulman:2014:combin}.
\end{remark}

\subsection{Standard Addition for SSDEs}\label{sec:stand-addit-ssde}
To cope with the dependencies between the digits, we have to combine the
conditional probabilities of the automaton in Figure~\ref{aut:ssde} with the
carries computed by the automaton in Figure~\ref{fig:stand-add-ssde}.
This is done in the same way as in Section~\ref{sec:standard-addition-q-d}.

All steps in this section, including the computation of the constants in
Theorem~\ref{thm:ssde-stand}, can be done in the mathematical software system
SageMath \cite{SageMath:2016:7.0} by using its included finite state
machine package described in \cite{Heuberger-Krenn-Kropf:ta:finit-state}. The
corresponding SageMath file is available at \cite{Heuberger-Kropf-Prodinger:2015:analy-carries-online}.

In this section, let $\calA$ be the automaton in Figure~\ref{aut:ssde}
equipped with the weights in \eqref{eq:trans-prob} and let $\calB$ be the
transducer in Figure~\ref{fig:stand-add-ssde} performing the standard addition
of two SSDEs.

We first construct the additive Cartesian product
$\calA^{2}$, recognizing all possible sequences $\bfs$ of
digitwise sums with the correct weights approximating the equidistribution on
two independent SSDEs $\bfx$ and $\bfy$. This
probabilistic automaton has $9$ states.

Next, we construct $\calS_{\SSDE}$ as the composition
$\calB\circ\calA^{2}$. This probabilistic automaton recognizes the sequence of
carries $\bfc$ with the correct weights approximating the
equidistribution on two independent SSDEs $\bfx$ and
$\bfy$. This gives a transducer with $45$ states.

Because of symmetries (cf.\ Remark~\ref{rem:interchange}), we can simplify
$\calS_{\SSDE}$ such that it has only $14$ states\footnote{For the actual
  computation, it is more efficient to already simplify $\calA^{2}$ by the
  SageMath method
  \href{http://www.sagemath.org/doc/reference/combinat/sage/combinat/finite_state_machine.html\#sage.combinat.finite_state_machine.FiniteStateMachine.markov_chain_simplification}{\texttt{FiniteStateMachine.markov\_chain\_simplification}},
  such that
  it only has $6$ states.}:
\begin{lemma}\label{lem:simplification} A probabilistic automaton can be simplified by applying the following rules:
  \begin{itemize}
  \item If between two states, there are two transitions with the same label,
    then these two transitions can be combined. The weights are summed up in
    this process.
  \item Let $\{C_{1},\ldots,C_{k}\}$ be a partition of the states of the
    automaton with the following property:
    If $a$, $b\in C_{j}$ are two states, then there is a bijection between the
      transitions leaving $a$ and the ones leaving $b$ which preserves the
      label, the weight of the transition and into which set of the partition the transitions
      lead. These bijections define an equivalence relation on the transitions
      leaving a set of the partition.

Then each set of the partition can be contracted to a new state. For each
equivalence class of transitions,
there is one transition in the simplified transducer. 
  \end{itemize}
\end{lemma}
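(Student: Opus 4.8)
The plan is to verify that each rule preserves, for every $\ell$, the quantity
\[
  G_\ell(x,y)=e_1^\top A(x,y)^\ell w_F,
\]
where $A(x,y)$ is the transition matrix, $e_1$ is the initial unit vector and $w_F$ collects the exit weights. By the proof of Lemma~\ref{lem:asy-analy} one has $\Expect\exp(s_1M_\ell+s_2N_\ell)=G_\ell(e^{s_1},e^{s_2})$, so preserving $G_\ell$ preserves the full joint distribution of $(M_\ell,N_\ell)$; I also check that every row sum of $A(1,1)$ stays equal to $1$, i.e.\ the simplified object is again probabilistic.

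For the first rule there is nothing to analyze: if two transitions from $i$ to $j$ carry the same label, they contribute the two summands $p_{t_1}x^{m}y^{n}$ and $p_{t_2}x^{m}y^{n}$ with \emph{identical} monomials to the entry $A(x,y)_{ij}=\sum_{t\colon i\to j}p_t x^{m(t)}y^{n(t)}$, and replacing them by one transition of weight $p_{t_1}+p_{t_2}$ and the common label contributes the single summand $(p_{t_1}+p_{t_2})x^{m}y^{n}$. Hence $A(x,y)$, $e_1$ and $w_F$ are literally unchanged, so $G_\ell$ is unchanged, and the row sums of $A(1,1)$ are unaffected.

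For the second rule I use a lumping argument. Let $\pi$ map each state to the block containing it, let $U$ be the $n\times k$ indicator matrix with $U_{i\alpha}=1$ iff $i\in C_\alpha$, and let $\widetilde A(x,y)$ be the $k\times k$ matrix of the contracted automaton, built by taking one transition per equivalence class with its common label and weight. The hypothesis that the transitions leaving any two states of a block are matched by a label-, weight- and target-block-preserving bijection says exactly that $\sum_{j\in C_\beta}A(x,y)_{ij}$ depends only on $\pi(i)$ and equals $\widetilde A(x,y)_{\pi(i)\beta}$; in matrix form this is the intertwining relation
\[
  A(x,y)\,U=U\,\widetilde A(x,y),
\]
whence $A(x,y)^\ell U=U\,\widetilde A(x,y)^\ell$ for all $\ell$ by induction. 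Since the block indicator of the initial state is $U^\top e_1$ and the exit weights are constant on blocks (in all our applications the blocks are orbits of the summand-swap symmetry of Remark~\ref{rem:interchange}, under which the product exit weight of $\calA^2$ is invariant), we may write $w_F=U\widetilde w_F$ and obtain
\[
  G_\ell(x,y)=e_1^\top A(x,y)^\ell U\widetilde w_F=(U^\top e_1)^\top\widetilde A(x,y)^\ell\widetilde w_F,
\]
which is precisely the corresponding generating function of the contracted automaton. Finally the total weight leaving a contracted state $C_\alpha$ equals the total weight leaving any single representative $a\in C_\alpha$, namely $1$, so the simplified automaton is again probabilistic.

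The one genuinely delicate point, and the step I would write out most carefully, is extracting the intertwining relation from the bijection hypothesis. One must check that it is the \emph{marked} block-sum $\sum_{j\in C_\beta}A(x,y)_{ij}$, and not merely the scalar outflow $\sum_j A(1,1)_{ij}=1$, that is constant on each block; this is where the present statement goes beyond classical Markov-chain lumpability, since the monomials $x^{m(t)}y^{n(t)}$ must be tracked together with the weights. The same bijection also guarantees that $\widetilde A(x,y)$ is well defined independently of the chosen representatives. The only hypothesis needed beyond the literal statement is that $w_F$ be constant on the blocks; I would record this explicitly and note that it holds automatically for the partitions used in Sections~\ref{sec:stand-addit-ssde} and~\ref{sec:asympt-analys-von}.
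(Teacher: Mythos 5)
The paper states this lemma without any proof at all --- it is treated as an evident construction (it is the standard exact-lumping / quotient construction for weighted automata) and is immediately applied to reduce $\calS_{\SSDE}$ from $45$ to $14$ states. Your argument is therefore not comparable to a proof in the paper, but it is correct and it is the argument one would naturally supply: the first rule leaves the transition matrix $A(x,y)$ literally unchanged because equal labels give equal monomials $x^{m(t)}y^{n(t)}$, and the second rule is exactly the intertwining relation $A(x,y)U=U\widetilde A(x,y)$, from which preservation of $e_1^\top A(x,y)^\ell w_F$ (hence of the joint distribution of $(M_\ell,N_\ell)$ via Lemma~\ref{lem:asy-analy}) and of the row-sum condition $\sum_{t\text{ leaves }i}p_t=1$ both follow. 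You also correctly identify the one hypothesis the lemma leaves implicit, namely that the exit weights must be constant on the blocks $C_j$; this does hold in the paper's applications (the blocks are orbits of the summand-swap symmetry of Remark~\ref{rem:interchange} and of the vertical symmetry of Figure~\ref{aut:ssde-von-neu}, and Table~\ref{tab:ssde-neumann-exit-weights} indeed lists one exit weight per block), but it deserves to be stated, as you do. For completeness you might add one sentence noting that in the von Neumann application the bijection is additionally required to preserve the dotted/solid class of each transition (as the paper remarks after \eqref{eq:states-ord}), so that the same intertwining argument applies separately to the matrices $B$ and $R$; this is a routine extension of your argument and does not affect its validity.
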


Thus, we obtain a $14\times14$ transition matrix of $\calS_{\SSDE}$ given in
Table~\ref{tab:ssde-stand} in the
appendix (using Lemma~\ref{lem:recursion-N}).

\begin{theorem}\label{thm:ssde-stand}
  The expected value of the number of carries equal to $1$ when adding two
  SSDEs of length $\ell$ is
  \begin{equation*}
    \frac {q^{2} + 2 q + 4}{8(q + 1)^{2}} \ell+\bigOh(1)
  \end{equation*}
  and the variance is
  \begin{equation*}
    \frac {7
q^{6} + 48 q^{5} + 159 q^{4} + 128 q^{3} - 48 q^{2} - 12 q - 8}{64(q + 1)^{5} (q - 1)}\ell+\bigOh(1).
  \end{equation*}

The same result holds for carries equal to $-1$. The covariance between
carries $1$ and $-1$ is
\begin{equation*}
  -\frac {q^6 + 24q^5 + 33q^4 + 80q^3 + 120q^2 - 12q - 8}{64(q + 1)^{5} (q - 1)}\ell+\bigOh(1).
\end{equation*}

The number of carries $1$ and $-1$ is asymptotically jointly normally distributed.
\end{theorem}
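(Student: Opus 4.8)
The plan is to apply Lemma~\ref{lem:asy-analy} to the probabilistic automaton $\calS_{\SSDE}$ whose $14\times14$ transition matrix $A(x,y)$ is recorded in Table~\ref{tab:ssde-stand}. First I would verify the hypotheses of the lemma: the automaton $\calS_{\SSDE}$ is strongly connected and aperiodic, which can be read off from the underlying graph, as it inherits strong connectivity from the transducer in Figure~\ref{fig:stand-add-ssde} and from $\calA$, while the self-loops (e.g.\ the transition $0\to0$ with label $-L\cup L\mid0$, nonempty for every even $q\ge2$) force the gcd of cycle lengths to be $1$. Since $\calS_{\SSDE}$ is the composition $\calB\circ\calA^{2}$ with weights coming from the approximate equidistribution of Lemma~\ref{lemma:conditional-probabilities}, the random variables $M_{\ell}$ and $N_{\ell}$ counting the carries $1$ and $-1$ agree with the quantities analyzed by the lemma up to the exponentially small error $\bigOh(\xi^{\ell})$ with $\xi=1/q$, which affects neither the linear main term nor the constant term of the moments.

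Next I would set $f(x,y,z)=\det(I-zA(x,y))$ and extract the partial derivatives $f_{x}$, $f_{y}$, $f_{z}$, $f_{xx}$, $f_{yy}$, $f_{zz}$, $f_{xy}$, $f_{xz}$, $f_{yz}$ evaluated at $(1,1,1)$. Substituting these into the formulas for $e_{m}$, $v_{m}$, $v_{n}$ and $c$ in Lemma~\ref{lem:asy-analy} yields the stated rational functions in $q$ for the expectation $e_{1}$, the variance $v_{1}$, and the covariance. To avoid recomputing $e_{-1}$ and $v_{-1}$ separately, I would invoke the negation symmetry of the SSDE: negating both summands sends $\bfx\mapsto-\bfx$ and $\bfy\mapsto-\bfy$, hence the digitwise sum $\bfs\mapsto-\bfs$ and the carry sequence $\bfc\mapsto-\bfc$, interchanging the roles of the carries $1$ and $-1$. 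Because the digit set $\{-q/2,\ldots,q/2\}$ and the syntactical rule defining the SSDE are invariant under negation, the equidistribution on expansions of length $\ell$ is preserved, so $(M_{\ell},N_{\ell})$ and $(N_{\ell},M_{\ell})$ are equidistributed. This immediately gives $e_{-1}=e_{1}$ and $v_{-1}=v_{1}$.

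Finally, for the joint central limit theorem I would check that the variance-covariance matrix is non-singular, i.e.\ that $v_{1}v_{-1}-c^{2}=v_{1}^{2}-c^{2}\neq0$ for every even $q\ge2$; by the last part of Lemma~\ref{lem:asy-analy} this delivers the asymptotic joint normality, and in particular $v_{1}>0$ yields the marginal statements. The main obstacle is computational rather than conceptual: forming the symbolic determinant of the $14\times14$ matrix $I-zA(x,y)$ over the polynomial ring in $x$, $y$, $z$ and $q$, taking its second-order partial derivatives at $(1,1,1)$, and simplifying the resulting rational expressions into the closed forms in the statement. These manipulations are carried out in SageMath; the theoretical results guarantee only that such closed-form constants exist, while their explicit values require the machine computation referenced in \cite{Heuberger-Kropf-Prodinger:2015:analy-carries-online}.
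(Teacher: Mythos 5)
Your proposal follows essentially the same route as the paper: apply Lemma~\ref{lem:asy-analy} to the $14$-state probabilistic automaton $\calS_{\SSDE}$ with transition matrix from Table~\ref{tab:ssde-stand}, compute the required partial derivatives of $f(x,y,z)=\det(I-zA(x,y))$ by machine, and transfer the result from the approximate equidistribution $W_{\ell}$ to the exact one via $\mathbb P_{\ell}=(1+\bigOh(\xi^{\ell}))W_{\ell}$. Your explicit negation-symmetry argument for $e_{-1}=e_{1}$ and $v_{-1}=v_{1}$ and the explicit check of the hypotheses of the lemma are slightly more detailed than the paper's terse proof, but they are consistent with it and correct.
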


\begin{figure}
  \centering
  \begin{tikzpicture}
    \begin{axis}[
      xlabel=$q$,
      ylabel=(co-)variance]
    \addplot[color=blue]
         table[x=q, y=varx] {table-var-standard-ssde};
    \addplot[color=red]
         table[x=q, y=covar] {table-var-standard-ssde};
    \legend{Variance,Covariance}
    \end{axis}
  \end{tikzpicture}
  \caption{Variance and covariance for SSDEs for $q=2,\ldots,100$ of Theorem~\ref{thm:ssde-stand}.}
  \label{fig:ssde-var-covar}
\end{figure}
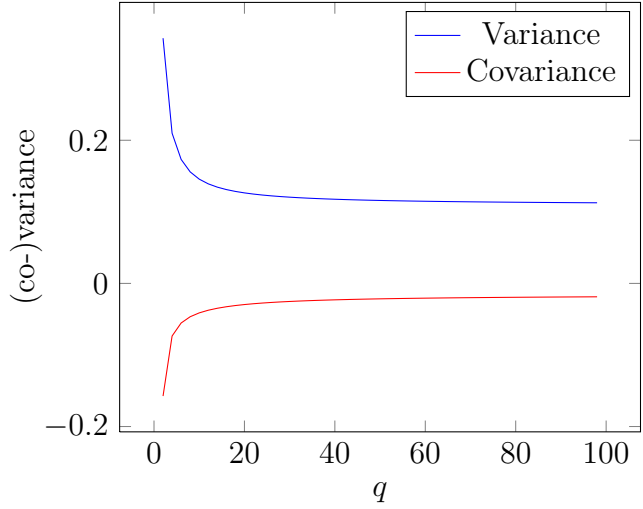
\begin{proof}
  We can compute the determinant $f(x,y,z)=\det(I-zA(x,y))$ of the transition
  matrix $A(x,y)$ in the appendix of the simplified
  automaton $\calS_{\SSDE}$ with $14$ states.
Thus, Lemma~\ref{lem:asy-analy} implies the expected value, the variance and the
central limit theorem where the input sequence is the sum of two independent SSDEs of
length $\ell$ with the approximate equidistribution $W_{\ell}$. 

As the (exact) equidistribution $\mathbb P_{\ell}$ satisfies $\mathbb
P_{\ell}=(1+\bigOh(\xi^{\ell}))W_{\ell}$, these results also hold for the (exact) equidistribution.
\end{proof}

\begin{remark}\label{rem:ignore-dep}
  If we neglect the dependencies between two adjacent digits, we obtain a
  different result: Assume that the digits are independently distributed with
  probabilities given in \eqref{eq:weights-digits-ssde}. Then the expected
  value of the number of carries $1$ is
  \begin{equation*}
    \frac{q^{12} + 7 q^{11} + 19 q^{10} + 27 q^{9} + 24 q^{8} + 9 q^{7} - 15 q^{6} - 15 q^{5} + 47 q^{4} + 104 q^{3} + 64 q^{2} - 48 q - 48}{8(q + 1)^{3} q^{2} (q^{7} + 4 q^{6} + 5 q^{5} -  q^{4} - 9 q^{3} - 8 q^{2} + 4)}
  \end{equation*}
and the variance is
\begin{align*}
  &\frac{1}{64}(7 q^{38} + 152 q^{37} + 1557 q^{36} + 9958 q^{35} + 44300 q^{34} + 144166
  q^{33} + 349511 q^{32}\\
 &\qquad+ 622942 q^{31} + 756995 q^{30} + 432788 q^{29} - 439628 q^{28} - 1347486
 q^{27} - 1407649 q^{26}  \\
&\qquad- 466340 q^{25}- 39181 q^{24} - 2293904 q^{23} - 6902413 q^{22} - 9055044 q^{21} - 2972395
q^{20} \\
 &\qquad+ 10157788 q^{19} + 19040707 q^{18}+ 12034998 q^{17} - 7655356 q^{16} - 21471482 q^{15}  \\
&\qquad - 15688011 q^{14}+
 1495584 q^{13} + 10611092 q^{12} + 5762536 q^{11}- 1482784 q^{10} \\ 
&\qquad - 1794016 q^{9} + 1000784 q^{8} + 744768 q^{7}-
1199872 q^{6} - 1204224 q^{5} + 120832 q^{4} \\
&\qquad + 574464 q^{3}+ 172032 q^{2} - 73728 q - 36864)\\
& \times q^{-4}(q + 1)^{-6}  (q^{7} + 4 q^{6} +
5 q^{5} -  q^{4} - 9 q^{3} - 8 q^{2} + 4)^{-3}\\
&\times
 (q^{7} + 2 q^{6} + q^{5} + q^{4} + q^{3} - 2 q^{2} + 4)^{-1}.
\end{align*}
As expected, the limit for $q$ to infinity is the same.
\end{remark}
\section{Von Neumann's Addition}\label{sec:von-neum-addit}

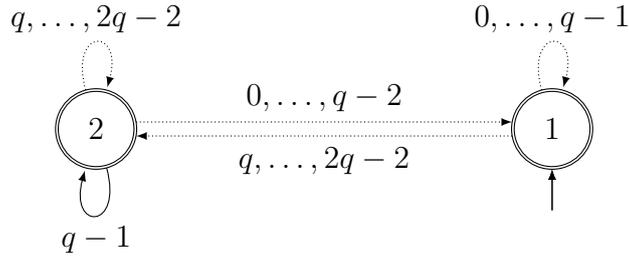
\begin{figure}
  \centering
  \begin{tikzpicture}[auto, initial text=, >=latex]
    \node[state, accepting, initial, initial below] (v0) at (6.000000, 0.000000) {$1$};
    \node[state, accepting] (v1) at (0.000000, 0.000000) {$2$};
    \path[->] (v0.190.00) edge[densely dotted] node[rotate=360.00, anchor=north] {$q,\ldots,2q-2$} (v1.350.00);
    \path[->] (v1.10.00) edge[densely dotted] node[rotate=0.00, anchor=south] {$0,\ldots,q-2$} (v0.170.00);
    \path[->] (v0) edge[loop above, densely dotted] node {$0,\ldots,q-1$} ();
    \path[->] (v1) edge[loop above, densely dotted] node {$q,\ldots, 2q-2$} ();
    \path[->] (v1) edge[loop below] node {$q-1$} ();
  \end{tikzpicture}
  \caption{Automaton to find the longest carry generating sequence for von
    Neumann's addition of two standard $q$-ary expansions.}
  \label{fig:neumann-q-0}
\end{figure}

In this section, we analyze von Neumann's addition algorithm for SSDEs, a parallel
algorithm using several iterations. This
algorithm was analyzed by Knuth in \cite{Knuth:1978:carry} for standard $q$-ary
expansions. In \cite{Heuberger-Prodinger:2003:carry-propag}, this analysis was
extended to $(q,d)$-expansions and SSDEs. However, for $q\geq 4$, the hardware
and software available at that time made the use of the probabilistic model of
Section~\ref{sec:weights-ssde} computationally infeasible. The
approximate model described in Remark~\ref{rem:ignore-dep} was used
instead. As Remark~\ref{rem:ignore-dep} demonstrates, this approximation may
lead to different main terms in the expectation and the variance.

As before, we choose an approximate equidistribution for all independent pairs of SSDEs of length $\ell$
as our probabilistic input model. In contrast to
the result in \cite{Heuberger-Prodinger:2003:carry-propag}, we
obtain more
natural constants occurring in the main term of the expectation and the variance.

For von Neumann's addition of two standard $q$-ary digit expansions, the number of
iterations depends on the longest subsequence $(q-1)\ldots(q-1)j$ with $j\geq
q$ of the digitwise sum $\bfs$, see \cite{Knuth:1978:carry}. Such sequences can be found by an
automaton with two classes of transitions (see Figure~\ref{fig:neumann-q-0}
and \cite[Figure~1]{Heuberger-Prodinger:2003:carry-propag}). One class
corresponds to the digit $(q-1)$ of a carry generating
sequence and is depicted by solid lines. The other class corresponds to all
other digits (including the digit $j$ of a carry generating sequence) and is
depicted by dotted lines. The longest
consecutive run of solid edges in the automaton in
Figure~\ref{fig:neumann-q-0} corresponds to the number of iterations of von Neumann's addition minus
$2$. The asymptotic analysis of these longest runs can be performed using the
probabilistic version of the automaton in Figure~\ref{fig:neumann-q-0}. We will extend
this approach to SSDEs with arbitrary even base using a larger probabilistic automaton in Section~\ref{sec:asympt-analys-von}.

\subsection{Algorithm}
\begin{table}
  \begin{center}      \def\1{\bar 1} \def\2{\bar 2} \setlength{\extrarowheight}{2pt}
$$    \begin{array}{>{(}r<{)_{4}}@{\;=\;}>{}r<{}@{\;=\;}r}
      110\1\2& \bfx=\bfz^{(0)}&314\\
      101\1\2&  \bfy=\bfc^{(0)}&266\\\hline
      21120& \bfz^{(1)}&600\\
      000\1\10& \bfc^{(1)}&-20\\\hline
      021010& \bfz^{(2)}&580\\
      0000000&\bfc^{(2)}&0
      \end{array}$$
    \caption{Example for von Neumann's addition for SSDEs with $q=4$. We have $t(110\bar1\bar2,101\bar1\bar2)=2$.}
    \label{tab:intro-ssde}
  \end{center}
\end{table}
Let $\bfx$ and $\bfy$ be two SSDEs.
The idea of the algorithm is to construct the sequence of digitwise sums
$\bfs=\bfx+\bfy$ and correct each position if the
number at this position is not in the digit set or at the border of the digit
set where we have to take into account the syntactical rule. 

As in \cite{Heuberger-Prodinger:2003:carry-propag}, we define
$(\bfz,\bfc)=\add(\bfs)$ with $\bfs=\bfx+\bfy$ by
\begin{align*}
  c_{0}&=0,\\
  c_{j+1}&=
  \begin{cases}
    \sgn(s_{j})&\text{if }|s_{j}|>\frac q2,\\
    &\quad\text{or }|s_{j}|=\frac q2
    \text{ and } \\
    &\quad(\sgn(s_{j})s_{j+1})\bmod q\geq \frac q2\\
    0&\text{otherwise,}
  \end{cases}\\
  z_{j}&=s_{j}-c_{j+1}q.
\end{align*}
Here, the choice of the carry $c_{j+1}$ corresponds to the one in
Algorithm~\ref{alg:stand-add-ssde}.
By iterating this step we obtain
$(\bfz^{(k+1)},\bfc^{(k+1)})=\add(\bfz^{(k)}+\bfc^{(k)})$ with
$\bfz^{(0)}=\bfx$ and $\bfc^{(0)}=\bfy$. If $\bfc^{(k)}=0$,
then $\bfz^{(k)}$ is the SSDE of the sum $\bfx+\bfy$ and the algorithm
stops. Note that during this process, $\bfz^{(k)}$ and $\bfc^{(k)}$ are not
necessarily SSDEs.

 In
\cite{Heuberger-Prodinger:2003:carry-propag}, the correctness and the termination of this
algorithm were proved. We denote the number of iterations of von Neumann's
addition algorithm by $t(\bfx,\bfy)=\min\{k\geq 0: \bfc^{(k)}=0\}$.

\subsection{Automaton}
\begin{figure}
  \centering
  \begin{tikzpicture}[auto, initial text=, >=latex, scale=0.88]
\tikzstyle{every initial by arrow}=[densely dotted]
\node[state, accepting, initial, initial below] (v0) at (0.000000, 0.000000) {$1$};
\node[state, accepting] (v1) at (7.000000, 7.000000) {$2$};
\node[state, accepting] (v2) at (7.000000, -7.000000) {$3$};
\node[state, accepting] (v3) at (4.000000, 0.000000) {$4$};
\node[state, accepting] (v4) at (5.80000, -2.000000) {$5$};
\node[state, accepting] (v5) at (-7.000000, 7.000000) {$7$};
\node[state, accepting] (v6) at (-7.000000, -7.000000) {$8$};
\node[state, accepting] (v7) at (-4.000000, 0.000000) {$9$};
\node[state, accepting] (v8) at (-5.80000, -2.000000) {$10$};
\path[use as bounding box] (-8.7,-8.7) rectangle (8.7,8.7);

\path[->] (v0) edge[in=-110, out=-70, densely dotted, looseness=1000, loop] 
    node[below] {$L\cup -L$} 
    ();
\path[->] (v0.5.00) edge[densely dotted] 
    node[sloped, anchor=south]{$H$} 
    (v3.175.00);
\path[->] (v0.-10.95) edge[densely dotted] 
    node[anchor=south, sloped, pos=0.65]{$\frac{q}{2}$} 
    (v4.159.05);
\path[->] (v0.185.00) edge[densely dotted] 
    node[sloped, anchor=north, pos=0.4]{$-H$} 
    (v7.355.00);
\path[->] (v0.-159.05) edge[densely dotted] 
    node[sloped, anchor=north]{$-\frac{q}{2}$} 
    (v8.10.95);

\path[->] (v1.-140.00) edge[densely dotted] 
    node[sloped, above] {$-L_{0}$} 
    (v0.50.00);
\path[->] (v1.-130.00) edge 
    node[sloped, below] {$L_{0}-1$} 
    (v0.40.00);
\path[->] (v1) edge[loop above, densely dotted] 
    node {$\frac{q}{2}$} 
    ();
\path[->] (v1.-85.00) edge 
    node[sloped, above]{$\frac{q}{2} - 1$} 
    (v2.85.00);
\path[->] (v1.-118.20) edge[densely dotted] 
    node[sloped, anchor=north]{$H_{q}$} 
    (v3.71.80);
\path[->] (v1.-108.20) edge[bend left=10] 
    node[sloped, anchor=north, style= near end]{$q$} 
    (v3.61.80);
\path[->] (v1.185.00) edge 
    node[sloped, anchor=north]{$-\frac{q}{2} - 1$} 
    (v5.355.00);
\draw[->, densely dotted] (v1) .. controls (10,-10) .. (v6)
     node[sloped, below, style=near start] {$-\frac{q}{2}$};
\path[->] (v1.-152.53) edge 
    node[sloped, anchor=north] {$-H-1$}
    (v7.27.47);

\path[->] (v2.130.00) edge 
    node[sloped, anchor=south] {$ -L_{0}$} 
    (v0.-40.00);
\path[->] (v2.140.00) edge[densely dotted] 
    node[sloped, below] {$L$} 
    (v0.-50.00);
\path[->] (v2.95.00) edge 
    node[sloped, anchor=south]{$\frac{q}{2}$} 
    (v1.265.00);
\path[->] (v2.118.20) edge[densely dotted] 
    node[sloped, anchor=north]{$ q$} 
    (v3.288.20);
\path[->] (v2.123.20) edge[bend left=10] 
    node[sloped, anchor=north, pos=0.75]{$H_{q}$}
    (v3.278.20);
\path[->] (v2.185.00) edge 
    node[sloped, anchor=north]{$-\frac{q}{2}$} 
    (v6.355.00);
\path[->] (v2.152.53) edge[densely dotted] 
    node[sloped, anchor=north]{$-H$} 
    (v7.322.53);

\path[->] (v3.185.00) edge[densely dotted] 
    node[sloped, anchor=north] {$-L_{0}\cup(L_{0}-1)$} 
    (v0.355.00);
\path[->] (v3.81.80) edge[densely dotted] 
    node[sloped, anchor=south]{$\frac{q}{2}$} 
    (v1.231.80);
\path[->] (v3.-61.80) edge[densely dotted] 
    node[sloped, anchor=south]{$\frac{q}{2} - 1$} 
    (v2.108.20);
\path[->] (v3) edge[loop right, densely dotted] 
    node {$H$} 
    ();
\path[->] (v3.142.53) edge[densely dotted] 
    node[sloped, anchor=north]{$-\frac{q}{2}- 1$} 
    (v5.332.53);
\path[->] (v3.-142.53) edge[densely dotted] 
    node[sloped, anchor=north]{$-\frac{q}{2}$} 
    (v6.27.47);
\draw[->, densely dotted] (v3) .. controls (0,1.8)  .. (v7) 
    node[sloped,above, style=midway] {$-H-1$};


\path[->] (v4.169.05) edge[densely dotted] 
    node[sloped, anchor=north] {$-L\cup L$} 
    (v0.339.05);
\path[->] (v4) edge[densely dotted] 
    node[sloped, anchor=south] {$\frac{q}{2}$}  
    (v1.-100.00);
\path[->] (v4) edge[densely dotted] 
    node[sloped, anchor=south] {$H$} 
    (v3);
\path[->] (v4) edge[densely dotted] 
    node[sloped, below] {$-\frac{q}{2}$} 
    (v6);
\path[->] (v4) edge[densely dotted, bend left=10] 
    node[sloped, below, pos=0.25]{$-H$} 
    (v7);

\path[->] (v5.-40.00) edge[densely dotted] 
    node[sloped, anchor=south] {$ L_{0}$} 
    (v0.130.00);
\path[->] (v5.-50.00) edge 
    node[sloped, below] {$-L_{0}+1$} 
    (v0.140.00);
\path[->] (v5.5.00) edge 
    node[sloped, anchor=south]{$\frac{q}{2}+ 1$} 
    (v1.175.00);
\draw[->, densely dotted] (v5) .. controls (-10,-10) .. (v2)
     node[sloped, anchor=north, style=near start] {$\frac{q}{2}$};
\path[->] (v5.-17.47) edge 
    node[sloped, anchor=south] {$H+1$} 
    (v3.132.53);
\path[->] (v5) edge[loop above, densely dotted]  
    node {$-\frac{q}{2}$} 
    ();
\path[->] (v5.-85.00) edge 
    node[sloped, anchor=south]{$-\frac{q}{2}+ 1$} 
    (v6.85.00);
\path[->] (v5.-51.80) edge[densely dotted]  
    node[sloped, anchor=south]{$-H_{q}$} 
    (v7.98.20);
\path[->] (v5.-71.80) edge[bend right=10] 
    node[sloped, below, style= near end]{$ -q$} 
    (v7.118.20);

\path[->] (v6.40.00) edge[densely dotted] 
    node[sloped, below] {$-L$} 
    (v0.-130.00);
\path[->] (v6.50.00) edge 
    node[sloped, anchor=south] {$ L_{0}$} 
    (v0.-140.00);
\path[->] (v6.5.00) edge 
    node[sloped, anchor=south]{$\frac{q}{2}$} 
    (v2.175.00);
\path[->] (v6.37.47) edge[densely dotted] 
    node[sloped, anchor=south]{$H$} 
    (v3.207.47);
\path[->] (v6.95.00) edge 
    node[sloped, anchor=south]{$-\frac{q}{2}$} 
    (v5.265.00);
\path[->] (v6.56.80) edge[bend right=10] 
    node[sloped, below, pos=0.75]{$-H_{q}$} 
    (v7.-98.20);
\path[->] (v6.71.80) edge[densely dotted] 
    node[sloped, anchor=south]{$ -q$} 
    (v7.241.80);

\path[->] (v7.5.00) edge[densely dotted] 
    node[sloped, anchor=south] {$(-L_{0}+1)\cup L_{0}$} 
    (v0.175.00);
\path[->] (v7.37.47) edge[densely dotted] 
    node[sloped, anchor=south]{$\frac{q}{2} + 1$} 
    (v1.197.47);
\path[->] (v7.-27.47) edge[densely dotted] 
    node[sloped, anchor=south]{$\frac{q}{2}$} 
    (v2.142.53);
\path[->] (v7.20.00) edge[densely dotted, bend left=20] 
    node[sloped, anchor=south] {$H+1$} 
    (v3.160.00);
\path[->] (v7.108.20) edge[densely dotted] 
    node[sloped, anchor=north]{$-\frac{q}{2}$} 
    (v5.298.20);
\path[->] (v7.-108.20) edge[densely dotted] 
    node[sloped, anchor=north]{$-\frac{q}{2} + 1$} 
    (v6.61.80);
\path[->] (v7) edge[loop left, densely dotted] 
    node {$-H$} 
    ();

\path[->] (v8.20.95) edge[densely dotted] 
    node[sloped, anchor=south] {$L\cup -L$} 
    (v0.190.95);
\path[->] (v8) edge[densely dotted] 
    node[sloped, below] {$\frac{q}{2}$} 
    (v2);
\path[->] (v8) edge[densely dotted, bend right=10] 
    node[sloped, below, pos =0.25] {$H$} 
    (v3);
\path[->] (v8) edge[densely dotted] 
    node[sloped, anchor=south, pos=0.45] {$-\frac{q}{2}$} 
    (v5.-80.00);
\path[->] (v8) edge[densely dotted] 
    node[sloped, anchor=south]{$-H$} 
    (v7);
\end{tikzpicture}
  \caption{Automaton in \cite[Figure~5]{Heuberger-Prodinger:2003:carry-propag}: $t(\bfx,\bfy)\leq k+2$ if and only if the automaton traverses at most $k$ solid edges when reading $(s_j)_{j\geq 0}$.}
  \label{aut:ssde-von-neu}
\end{figure}
A description of all SSDEs $\bfx$ and $\bfy$ with $t(\bfx,\bfy)=k$ is
given in \cite{Heuberger-Prodinger:2003:carry-propag}. This description is in terms of an automaton  and leads to the
automaton in \cite[Figure~5]{Heuberger-Prodinger:2003:carry-propag} reproduced
here as Figure~\ref{aut:ssde-von-neu}. We use the sets $L=\{0,\ldots,q/2-1\}$,
$L_{0}=L\setminus\{0\}$, $H=\{q/2+1,\ldots, q\}$ and $H_{q}=H\setminus\{q\}$.

From
\cite[Theorem~3.4]{Heuberger-Prodinger:2003:carry-propag}, we know that
$t(\bfx,\bfy) \leq k+2$ if and only if this automaton traverses at most
$k$ consecutive solid transitions when reading $\bfs$.
\begin{remark}\label{rem:read-inf-sequ}
Strictly speaking, the automaton reads the sequence
$(s_{j})_{j\geq 0}$ where $s_{j}=x_{j}+y_{j}$ for $j\leq J$ and $s_{j}=0$ for
$j>J$, for some $J$. However, most of the solid edges are visited while $j\leq
J$. All transitions with label $0$ lead to state $1$. Those from states $2$
and $7$ are solid edges, all others are dotted. If the transition is in state
$2$ (or $7$) after reading $s_{J}$, an additional solid edge will be traversed. Thus, we have to specially
treat the states
$2$ and $7$.
\end{remark}

\section{Asymptotic Analysis of von Neumann's Addition}\label{sec:asympt-analys-von}
For the asymptotic analysis, we combine the automaton in
Figure~\ref{aut:ssde-von-neu} with the probabilistic model for SSDEs from
Section~\ref{sec:weights-ssde} in the same way as in
Section~\ref{sec:stand-addit-ssde}.

All steps in this section, including the computation of the constants in
Theorem~\ref{thm:ssde-neumann}, can be done in the mathematical software system
SageMath \cite{SageMath:2016:7.0} by using the included finite state
machine package described in \cite{Heuberger-Krenn-Kropf:ta:finit-state}. The
corresponding SageMath file is available at \cite{Heuberger-Kropf-Prodinger:2015:analy-carries-online}.

We again use the automata $\mathcal A$ and $\mathcal
A^{2}$ described in Section~\ref{sec:stand-addit-ssde}, recognizing SSDEs and
the digitwise sum of two SSDEs, respectively. As before, the next step is to construct the
Cartesian product $\calN_{\SSDE}$ of the automaton $\calB$ in
Figure~\ref{aut:ssde-von-neu} and  $\mathcal A^{2}$.

After simplifying this construction as described in Lemma~\ref{lem:simplification}, the probabilistic automaton $\calN_{\SSDE}$
has $12$ states:
{\allowdisplaybreaks
\begin{equationaligned}
&\{(1,(-1, 1)),(1,(1, -1))\},\OBHnotag\\
&\{(4,(0, 0)), (9,(0, 0))\},\OBHnotag\\
&\{(5,(0, 1)), (5,(1, 0)), (10,(-1,0)), (10,(0, -1))\},\label{eq:states-ord}\\
&\{(2,(0, 1)), (2,(1, 0)), (7,(-1, 0)), (7,(0, -1))\},\OBHnotag\\
&\{(5,(0, 0)),(10,(0, 0))\},\OBHnotag\\
&\{(2,(0,0)), (7,(0, 0))\},\OBHnotag\\
&\{(1,(-1, 0)), (1,(0, -1)), (1,(0, 1)), (1,(1, 0))\},\OBHnotag\\
&\{(3,(0, 1)), (3,(1, 0)), (8,(-1, 0)),(8,(0, -1))\},\OBHnotag\\
&\{(1,(0, 0))\},\OBHnotag\\
&\{(3,(0, 0)), (8,(0, 0))\},\OBHnotag\\
&\{(4,(1, 1)), (9,(-1, -1))\},\OBHnotag\\
&\{(4,(0, 1)), (4,(1, 0)), (9,(-1, 0)),(9,(0, -1))\}.\OBHnotag
\end{equationaligned}}
In this case, the simplification is done in the same way as in
Lemma~\ref{lem:simplification}, but also taking into account the class (dotted
or solid) of a transition. The partition of the set of states was constructed
by the symmetries between the two sequences $\bfx$ and $\bfy$ described in
Remark~\ref{rem:interchange}, for example $\{(1,(-1, 1)),(1,(1, -1))\}$, and the additional vertical symmetry of the
automaton in Figure~\ref{aut:ssde-von-neu}, for example $\{(4,1, 1),(9,-1, -1)\}$.

The state $(1,(0,0))$ is initial and all states are final.

The next theorem is an extension of Lemma~2.5 in
\cite{Heuberger-Prodinger:2003:carry-propag} additionally including the variance and convergence
in distribution.
\begin{theorem}\label{theorem:bootstrapping}
   Let $w_{\ell k}$, $\ell$, $k \ge 0$, be non-negative numbers
  with generating function
  \begin{equation*}
    G_{k}(z)=\frac{R_k(z)}{S_k(z)}=\sum_{\ell \ge 0} w_{\ell k} z^\ell
  \end{equation*}
such that $w_{\ell k}$ is non-decreasing in $k$.
  
  Assume that
  \begin{align*}
    R_k(z)&=r_0(z)+ r_{1}\Big(z,\Big(\frac
    z{a_{1}}\Big)^{k},\ldots,\Big(\frac z{a_{m}}\Big)^{k}\Big),\\
    S_k(z)&=(1-z)s_0(z)+\Big(\frac z{a_1}\Big)^k s_1(z)+
    s_{2}\Big(z,\Big(\frac z{a_{1}}\Big)^{k},\ldots,\Big(\frac z{a_{m}}\Big)^{k}\Big),
  \end{align*}
  where $r_0$, $s_0$, and $s_1$ are real polynomials in $z$
  (not depending on $k$). Furthermore,
  $r_{1}$ and $s_{2}$ are real polynomials in $z$, $(z/a_1)^k$, \dots, $(z/a_m)^k$
  for some $m\ge 2$ and some real numbers $1<a:=a_1<\lvert a_2\rvert\le\lvert
a_3\rvert\le
  \dots\le \lvert a_m\rvert$ such that each of the summands in $r_{1}$ is
  divisible by one of the terms
  $(z/a_{1})^{k}$, \ldots, $(z/a_{m})^{k}$ and each of the summands in $s_{2}$
  is  divisible by one of the terms
  $(z/a_{1})^{2k}$, $(z/a_{2})^{k}$, \ldots, $(z/a_{m})^{k}$. Define
  \begin{equation*}
    \delta:=s_1(1)/s_0(1),\qquad
    \rho:=\min\left(\log\lvert a_2\rvert/\log a_1, 2\right)-1.
  \end{equation*}
  Assume
  furthermore that $r_{0}(1)\neq 0$, that $s_0$ does not have any zero in
  $\lvert z\rvert\le 1$ and that $\delta>0$.

  Then $G(z):=\frac{r_{0}(z)}{(1-z)s_{0}(z)}=\lim_{k\to\infty} G_k(z)$ and
  \begin{equation*}
    G(z)=\sum_{\ell \geq 0}w_{\ell }z^{\ell }
  \end{equation*}
  with $w_{\ell}=w_{\ell k}$ for $k\geq \ell$. Additionally, $w_{\ell}\neq 0$
  for $\ell\geq\ell_{0}$ for a suitable $\ell_{0}$.

  Let $(X_{\ell})_{\ell\geq\ell_{0}}$ be the sequence of random variables with support $\mathbb N_{0}$
  defined by
  \begin{equation*}
    \mathbb P(X_{\ell }\leq k)=\frac{w_{\ell k}}{w_{\ell }}.
  \end{equation*}
  
  Then the asymptotic formula
  \begin{equation}\label{eq:2}
    \frac{w_{\ell k}}{w_{\ell }}=\exp(-\delta\ell/a^{k})(1+o(1))
  \end{equation}
  holds as $\ell \rightarrow \infty$ for $k=\log_{a} \ell +\bigOh(1)$. Hence the
  shifted random variable $X_{\ell }-\log_{a}\ell $ converges weakly to a limiting
  distribution if $\ell $ runs through a subset of the positive integers such that
  the fractional part $\{\log_{a}\ell \}$ of $\log_{a}\ell $ converges.

The expected value of $X_{\ell }$ is 
  \begin{equation}\label{eq:3}
    \mathbb EX_{\ell}=\log_a\ell + \log_a\delta +\frac{\gamma}{\log
      a}+\frac12+
    \Psi_{0}(\log_a \ell+\log_a\delta)
    + \bigOh\left(\frac{\log^{\rho+3} \ell}{\ell^\rho}\right),
  \end{equation}
  the variance is
  \begin{multline}\label{eq:6}
    \mathbb
    VX_{\ell}=\frac{\pi^{2}}{6\log^{2}a}+\frac{1}{12}+\Psi_{1}(\log_{a}\ell+\log_{a}\delta)-\frac{2\gamma}{\log
    a}\Psi_{0}(\log_{a}\ell+\log_{a}\delta)\\-\Psi_{0}^{2}(\log_{a}\ell+\log_{a}\delta)+\bigOh\Big(\frac{\log^{\rho+4}\ell}{\ell^{\rho}}\Big),
  \end{multline}
  where $\gamma$ is the Euler--Mascheroni constant, and $\Psi_{0}(x)$ and $\Psi_{1}(x)$ are periodic functions (with period $1$ and mean value $0$), 
  given by the Fourier expansions
  \begin{align}\label{eq:le:asymptotik:psi}
    \Psi_{0}(x)=-\frac1{\log 
      a}\sum_{n\neq0}\Gamma\Big(-\frac{2n\pi 
      i}{\log a}
    \Big)e^{2n\pi ix},\\
    \Psi_{1}(x)=\frac{2}{\log^{2}a}\sum_{n\neq0}\Gamma'\Big(-\frac{2n\pi 
      i}{\log a}
    \Big)e^{2n\pi ix}.
  \end{align}
\end{theorem}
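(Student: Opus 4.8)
The plan is to carry out a singularity analysis of $G_k$ that is uniform in $k$, read off the location of its dominant pole, and feed the resulting tail estimate into a Mellin-transform evaluation of the moments; the double-exponential limit law then drops out of the tail estimate directly.

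First I would identify the limit. For $\lvert z\rvert\le 1$ every term $(z/a_j)^k\to0$ as $k\to\infty$ since $a_j>1$, so $R_k(z)\to r_0(z)$ and $S_k(z)\to(1-z)s_0(z)$, giving $G(z)=r_0(z)/((1-z)s_0(z))$. Because $s_0$ has no zero in $\lvert z\rvert\le1$, the only singularity of $G$ in the closed unit disk is the simple pole at $z=1$, and its residue is $r_0(1)/s_0(1)\neq0$. Singularity analysis then yields $w_\ell\to r_0(1)/s_0(1)$, a nonzero constant; this proves the first assertion, supplies $\ell_0$, and identifies $\lim_\ell w_\ell$.

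The heart of the argument is to locate the dominant zero of $S_k$. Writing $S_k(z)=(1-z)s_0(z)+(z/a_1)^ks_1(z)+s_2(\dots)$ and using that every summand of $s_2$ is divisible by $(z/a_1)^{2k}$ or by $(z/a_j)^k$ with $j\ge2$—hence negligible against $(z/a_1)^k$ near $z=1$—a Rouch\'e or implicit-function argument produces a unique simple zero
\begin{equation*}
  z_k=1+\delta a^{-k}\bigl(1+\bigOh(a^{-k}+\lvert a_1/a_2\rvert^{k})\bigr),\qquad \delta=\frac{s_1(1)}{s_0(1)}>0,
\end{equation*}
lying just outside the unit circle, while the remaining zeros of $S_k$ stay bounded away from $z=1$ (they approach the zeros of $s_0$, all outside the closed unit disk, or escape with the $a_j$). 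Thus $z_k$ is the singularity of smallest modulus, and extracting $[z^\ell]G_k(z)$ as the residue at $z_k$, together with $-R_k(z_k)/S_k'(z_k)\to r_0(1)/s_0(1)=\lim_\ell w_\ell$, gives $w_{\ell k}=(\lim_{\ell'} w_{\ell'})\,z_k^{-\ell}(1+o(1))$ uniformly for $k=\log_a\ell+\bigOh(1)$. Since $z_k^{-\ell}=\exp(-\ell\log z_k)=\exp(-\delta\ell/a^k)(1+o(1))$, dividing by $w_\ell$ yields \eqref{eq:2}. The exponent $\rho=\min(\log\lvert a_2\rvert/\log a_1,2)-1$ is precisely what controls the two secondary error sources (the $a_2$-cluster and the $(z/a_1)^{2k}$ terms) uniformly. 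Writing $k=\lfloor\log_a\ell+t\rfloor$ then turns \eqref{eq:2} into $\mathbb P(X_\ell-\log_a\ell\le t)\to\exp(-\delta a^{\{\log_a\ell+t\}-t})$, i.e.\ weak convergence to a double-exponential limit along subsequences on which $\{\log_a\ell\}$ converges.

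For the moments I would use $\mathbb E X_\ell=\sum_{k\ge0}\bigl(1-w_{\ell k}/w_\ell\bigr)$ and $\mathbb E X_\ell^2=\sum_{k\ge0}(2k+1)\bigl(1-w_{\ell k}/w_\ell\bigr)$, replace $w_{\ell k}/w_\ell$ by $\exp(-\delta\ell/a^k)$—the accumulated error being of the stated order by the $\rho$-estimate—and evaluate the resulting harmonic sums by the Mellin transform, as in the treatment of the expectation in \cite[Lemma~2.5]{Heuberger-Prodinger:2003:carry-propag}. The Mellin transform of $1-e^{-x}$ is $-\Gamma(s)$, so the summed transform is $-\Gamma(s)/(1-a^s)$; the double pole at $s=0$ delivers the main term $\log_a\ell+\log_a\delta+\gamma/\log a$, the Euler--Maclaurin correction between sum and integral supplies the $\tfrac12$, and the poles at $s=2n\pi i/\log a$ ($n\ne0$) produce the fluctuation $\Psi_0$ of \eqref{eq:le:asymptotik:psi}, giving \eqref{eq:3}. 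The variance \eqref{eq:6} follows from the same Mellin analysis of $\mathbb E X_\ell^2$ (whose transform now carries an extra factor $a^s/(1-a^s)$, raising pole orders so that the fluctuation involves $\Gamma'$, hence $\Psi_1$) combined with the square of \eqref{eq:3}, the cross terms producing $-\tfrac{2\gamma}{\log a}\Psi_0-\Psi_0^2$. The main obstacle is making the singularity analysis uniform in $k$ across the whole range needed by the moment sums and tracking the errors through Mellin inversion at the precision $\bigOh(\log^{\rho+3}\ell/\ell^\rho)$ and $\bigOh(\log^{\rho+4}\ell/\ell^\rho)$; this is exactly where the ordering and divisibility hypotheses on $a_2,\dots,a_m$, $r_1$ and $s_2$ are needed.
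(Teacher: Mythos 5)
Your proposal follows essentially the same route as the paper: identify the limit $G$, locate the unique dominant real zero $\zeta_k=1+\delta a^{-k}(1+o(1))$ of $S_k$ via Rouch\'e and a sign argument, extract $w_{\ell k}$ by a residue computation to get the tail estimate \eqref{eq:2} and the double-exponential limit, and then evaluate $\sum_k(1-w_{\ell k}/w_\ell)$ and $\sum_k(2k+1)(1-w_{\ell k}/w_\ell)$ by Mellin transforms with the errors controlled through $\rho$. The only cosmetic inaccuracy is the attribution of the constant $\tfrac12$ to an Euler--Maclaurin correction: it actually arises from the full Laurent expansion of $-\Gamma(s)/(1-a^s)$ at the double pole $s=0$, but since you invoke the known harmonic-sum asymptotics (as does the paper), this does not affect the result.
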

\begin{proof}
Parts of the proof of this theorem follow along the same lines as the proof of
Lemma~2.5 in~\cite{Heuberger-Prodinger:2003:carry-propag}. However, we
include all steps of the proof for the
sake of readability.

Without loss of generality, we can assume $r_{0}(1)/s_{0}(1)=1$,
  as otherwise $w_{\ell k}$ and $w_{\ell}$ are multiplied by a
  constant. Let $0\le k_1\le k_2\le k_3$ denote suitable constants.

For some $C>0$ such that there is no root of $s_0$ inside $\{z:\lvert z\rvert\le
  1+2C\}$ and such that $(1+C)/a<1$, we have
  \begin{equation*}
    \lvert S_k(z)-(1-z)s_0(z)\rvert=\bigOh\Bigl(\bigl((1+C)/a\bigr)^k\Bigr)<\lvert(1-z)s_0(z)\rvert 
  \end{equation*}
  for $\lvert z\rvert=1+C$ and $k\ge k_1$. By Rouch\'e's Theorem, we conclude that
  for $k\ge k_1$,
  $S_k(z)$ has exactly one simple root in the disk $\{z:\lvert z\rvert\le
  1+C\}$.

Since $\sgn(S_k(1))=\sgn(s_1(1))$ and $\sgn(S_k(1+1/k))=-\sgn(s_0(1))$ for
  $k\ge k_2$, the assumption $\delta>0$ implies that $S_k(z)$ has a real root
  $\zeta_k=1+\varepsilon_k$ with $0<\varepsilon_k<1/k$ for $k\ge k_2$. Inserting this
  in $S_k(1+\varepsilon_k)=0$ yields $\varepsilon_k=\bigOh(1/a^k)$. Using
  $S_k(1+\varepsilon_k)=0$ again shows that
  \begin{equation*}
    \varepsilon_k=\frac{\delta}{a^k}\bigl(1+\bigOh(k/c^k)\bigr),
  \end{equation*}
  where $\min\{a, \lvert a_2\rvert/a\}= a^\rho=: c>1$. 

Since
$G_{k}$ and $G$ are rational functions, $G_{k}$ and $G$ can be continued
analytically beyond their dominant singularities $\zeta_{k}$ and $1$, respectively. We have
$\lim_{k\rightarrow\infty}\Res_{z=\zeta_{k}}G_{k}(z)=\Res_{z=1}G(z)=1$. Thus
\cite[Theorem~1]{Prodinger-Wagner:ta:boots} implies \eqref{eq:2} and the
limiting distribution.

The coefficients $w_{\ell k}$ and $w_{\ell}$ of $z^{\ell}$ in $G_{k}$ and $G$,
respectively, coincide for $k\geq \ell$. Thus the support of $X_{\ell}$ is
finite. Furthermore, the condition on $s_{0}$ implies that $w_{\ell}=1+\bigOh(\kappa^{\ell})$ for a constant
$0\leq\kappa<1$ by singularity analysis. Thus, the expectation is 
\begin{equation}\label{eq:7}
  \mathbb E X_{\ell}=\sum_{k\geq 0}k\mathbb P(X_{\ell}=k)=\sum_{k=
    0}^{\ell}\Big(1-\frac{w_{\ell k}}{w_{\ell}}\Big)=\sum_{k=0}^{\ell}(1-w_{\ell
    k})+\bigOh(\ell \kappa^{\ell}).
\end{equation}

  Using the residue theorem and the assumption $r_0(1)=s_0(1)$, we get 
  \begin{align*}
    w_{\ell k}&=\Res_{z=0}\frac{R_k(z)}{z^{\ell+1}S_k(z)}\\
    &= \frac1{2\pi i}\oint_{\lvert z\rvert=1+C/2} \frac{R_k(z)}{z^{\ell+1}S_k(z)} - 
    \Res_{z=\zeta_k}\frac{R_k(z)}{z^{\ell+1}S_k(z)}\\
    &= -\frac{R_k(\zeta_k)}{S'_k(\zeta_k)}\zeta_k^{-(\ell+1)} + \bigOh((1+C/2)^{-\ell})\\
    &= \exp(-\ell\delta/a^k)\bigl(1+\bigOh(k/a^k)+\bigOh(\ell k/(a^kc^k))\bigr)+ \bigOh((1+C/2)^{-\ell})\end{align*}
  for $k_3\le k\le n$.

Replacing $w_{\ell k}$ with $\exp(-\ell\delta/a^{k})$ yields the error terms
\begin{equation}\label{eq:8}
  \lvert w_{\ell k}-\exp(-\ell\delta/a^{k})\rvert=
  \begin{cases}
    \bigOh(\ell^{-2})&\text{for }0\leq k\leq\log_{a}(\ell\delta/(4 \log \ell)),\\
    \bigOh(\log_{a}^{\rho+2}\ell/\ell^{\rho})&\text{for }\log_{a}( \ell\delta/(4\log
    \ell))\leq k\leq 5\log_{a}\ell,\\
    \bigOh(\ell^{-3})&\text{for }5\log_{a}\ell\leq k\leq \ell
  \end{cases}
\end{equation}
where we used $w_{\ell k} \le w_{\ell k_3}$ for $k\le k_3$.
As
$1-\exp(-\ell\delta/a^{k})$ is exponentially small for $k>\ell$, we obtain
\begin{equation}\label{eq:sum-main-expectation}
  \sum_{k=0}^\ell (1-w_{\ell k})=\sum_{k=0}^\infty \bigl(1-\exp(-\ell \delta/
    a^k)\bigr) + O\left(\frac{\log^{\rho+3} \ell}{\ell^\rho}\right).
\end{equation}
Thus, \eqref{eq:3} follows  from \eqref{eq:7}, \eqref{eq:sum-main-expectation} and the well known fact (see e.g.\ \cite{Flajolet-Gourdon-Dumas:1995:mellin}) that
  \begin{align*}
    \sum_{k\ge0}\left(1-e^{-
        x/a^k}\right)
    =\log_ax+\frac{\gamma}{\log a}+\frac12+\Psi_{0}(\log_ax)+
    \bigOh(x^{-1})
  \end{align*}
  with the periodic function $\Psi_{0}(x)$ given in \eqref{eq:le:asymptotik:psi}.

The second moment is
\begin{equation}\label{eq:4}
  \begin{aligned}
    \mathbb E X_{\ell}^{2}&=\sum_{k\geq 0}k^{2}\mathbb
    P(X_{\ell}=k)=\sum_{k=0}^{\ell}(2k+1)\Big(1-\frac{w_{\ell
        k}}{w_{\ell}}\Big)\\
    &=\sum_{k=0}^{\ell}(2k+1)(1-w_{\ell
      k})+\bigOh(\ell^{2}\kappa^{\ell}).
  \end{aligned}
\end{equation}

As $\sum_{k=0}^{\ell}(1-w_{\ell k})$ has already been computed for the
expectation, we are left with $\sum_{k=0}^{\ell}k(1-w_{\ell k})$.
We use \eqref{eq:8} to obtain
\begin{equation}\label{eq:5}
  \sum_{k=0}^{\ell}k(1-w_{\ell k})=\sum_{k\geq 0}k(1-\exp(-\ell\delta/a^{k}))+\bigOh\Big(\frac{\log^{\rho+4}\ell}{\ell^{\rho}}\Big).
\end{equation}

The Mellin transform (see \cite{Flajolet-Gourdon-Dumas:1995:mellin}) of the
harmonic sum
$F(x)=\sum_{k\geq 0}k(1-\exp(-x/a^{k}))$ is 
\begin{equation*}
F^{*}(s)=\frac{-a^{s}}{(1-a^{s})^{2}}\Gamma(s)
\end{equation*}
for $-1<\Re s<0$. The singular expansion of this Mellin transform at $\Re s=0$
is
\begin{align*}
  F^{*}(s)&\asymp
  -\frac{1}{\log^{2}a}s^{-3}+\frac{\gamma}{\log^{2}a}s^{-2}+\Big(\frac
  1{12}-\frac{1}{2\log^{2}a}\Big(\gamma^{2}+\frac{\pi^{2}}{6}\Big)\Big)s^{-1}\\
  &\quad- \sum_{n\neq
  0}\frac{\Gamma(-\chi_{n})}{\log^{2}a} (s+\chi_{n})^{-2}-\sum_{n\neq 0}\frac{\Gamma'(-\chi_{n})}{\log^{2}a} (s+\chi_{n})^{-1}
\end{align*}
for $\chi_{n}=\frac{2\pi i n}{\log a}$. Thus,
\begin{align*}
  F(x)&=\frac 12\log_{a}^{2}x+\frac{\gamma}{\log
    a}\log_{a}x-\frac{1}{12}+\frac{1}{2\log^{2}a}\Big(\gamma^{2}+\frac{\pi^{2}}{6}\Big)\\
  &\quad-\frac{\log_{a}x}{\log
  a}\sum_{n\neq0}\Gamma(-\chi_{n})\exp(2\pi in\log_{a}
x)\\
&\quad+\frac{1}{\log^{2}a}\sum_{n\neq0}\Gamma'(-\chi_{n})\exp(2\pi in\log_{a}
x)+\bigOh(x^{-1}).
\end{align*}

Thus, $\mathbb VX_{\ell}=\mathbb EX_{\ell}^{2}-(\mathbb EX_{\ell})^{2}$,
\eqref{eq:4}, \eqref{eq:7}, \eqref{eq:3} and \eqref{eq:5} give the variance as stated in \eqref{eq:6}.
\end{proof}

\begin{theorem}\label{thm:ssde-neumann}
  Let $q\geq 2$ be even. Then the expected number of iterations when adding
  two SSDE of length $\ell$ with von Neumann's algorithm is
  \begin{equation}\label{eq:1}
    \log_{q}\ell+\log_{q}\delta+\frac{\gamma}{\log q}+\frac 52+\Psi_{0}(\log_{q}\ell+\log_{q}\delta)+\bigOh(\ell^{-1}\log^{4}\ell)
  \end{equation}
where
  \begin{equation*}
    \delta=\frac{(q - 1)(4q^{10} + 10q^9 + 18q^8 - 4q^7 - 10q^6 + 7q^5 + 44q^4 -
      29q^3 - 8q^2 - 20q + 16)}{4q^{3}(q + 1)^2(4q^7 - q^5 - 6q^4 + 8q^3 + 2q
      - 4)},
  \end{equation*}
  $\Psi_{0}(x)$ is a $1$-periodic function with mean $0$ given by the
  Fourier expansion
  \begin{equation}\label{eq:psi}
    \Psi_{0}(x)=-\frac{1}{\log q}\sum_{k\neq 0}\Gamma\Big(-\frac{2k\pi i}{\log
      q}\Big)e^{2k\pi ix}.
  \end{equation}
  The variance of the number of iterations is
  \begin{equation}\label{eq:var}
    \frac{\pi^{2}}{6\log^{2}q}+\frac
    {1}{12}+\Psi_{1}(\log_{q}\ell+\log_{q}\delta)-\frac{2\gamma}{\log q}\Psi_{0}(\log_{q}\ell+\log_{q}\delta)-\Psi_{0}^{2}(\log_{q}\ell+\log_{q}\delta)+\bigOh(\ell^{-1}\log^{5}\ell)
  \end{equation}
  where $\Psi_{1}$ is a $1$-periodic function with mean $0$ given by the Fourier expansion
  \begin{equation}
    \label{eq:psi-1}
    \Psi_{1}(x)=\frac{2}{\log^{2}q }\sum_{k\neq 0}\Gamma'\Big(-\frac{2k\pi i}{\log
    q}\Big)e^{2k\pi ix}.
  \end{equation}
The asymptotic formula
\begin{equation*}
  \mathbb P_{\ell}(t(\bfx,\bfy)\leq k)=\exp(-\delta\ell/q^{k})(1+o(1))
\end{equation*}
holds as $\ell\rightarrow\infty$ for $k=\log_{q}\ell+\bigOh(1)$.
The random variable $t(\bfX,\bfY)-\log_{q}\ell$ converges weakly to a
double exponential random variable if $\ell$ runs through a subset of the
positive integers such that the fractional part $\{\log_{q}\ell\}$ converges.
\end{theorem}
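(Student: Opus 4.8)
The plan is to reduce the analysis of $t(\bfx,\bfy)$ to the framework of Theorem~\ref{theorem:bootstrapping} via the automaton $\calN_{\SSDE}$. By \cite[Theorem~3.4]{Heuberger-Prodinger:2003:carry-propag} (recalled above), $t(\bfx,\bfy)\le k+2$ holds if and only if the automaton in Figure~\ref{aut:ssde-von-neu} traverses at most $k$ consecutive solid edges when reading $\bfs=\bfx+\bfy$. Thus, writing $X_{\ell}$ for the length of the longest run of solid edges, we have $t=X_{\ell}+2$, and it suffices to analyze $X_{\ell}$. First I would set $w_{\ell k}=W_{\ell}(\{(\bfx,\bfy):\text{longest solid run}\le k\})$, the weight (under the approximate equidistribution of Section~\ref{sec:weights-ssde}, lifted to $\calA^{2}$) of all pairs of SSDEs of length $\ell$ whose digitwise sum induces at most $k$ consecutive solid transitions, so that $w_{\ell k}=\mathbb P_{\ell}(t\le k+2)$ up to the factor $1+\bigOh(\xi^{\ell})$. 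These numbers are non-negative and non-decreasing in $k$, as required by Theorem~\ref{theorem:bootstrapping}.

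Next I would produce the generating functions $G_{k}(z)=\sum_{\ell\ge0}w_{\ell k}z^{\ell}$ in the rational form $R_{k}(z)/S_{k}(z)$. Starting from the $12$-state probabilistic automaton $\calN_{\SSDE}$ with its transitions split into the solid and dotted classes, I would build a transfer matrix $A_{k}(z)$ that tracks the current length of the ongoing solid run and forbids runs exceeding $k$: truncating the geometric series of consecutive solid steps at length $k$ introduces exactly the powers $(z/a_{i})^{k}$ appearing in the hypotheses, where the $a_{i}$ are governed by the solid-edge subautomaton and $a_{1}=q$ is the dominant value. The states $2$ and $7$ require the special treatment described in Remark~\ref{rem:read-inf-sequ}: when the input terminates in one of these states a single further solid edge is traversed while reading the trailing zeros, so this boundary contribution to the longest run must be incorporated into $X_{\ell}$ (an exit-weight adjustment in $\calN_{\SSDE}$). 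Carrying out the determinant computation $\det(I-zA_{k}(z))$ in SageMath~\cite{SageMath:2016:7.0} yields explicit polynomials $r_{0},s_{0},s_{1}$ and the decomposition $R_{k}=r_{0}+r_{1}(\dots)$, $S_{k}=(1-z)s_{0}+(z/q)^{k}s_{1}+s_{2}(\dots)$ matching Theorem~\ref{theorem:bootstrapping}.

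It then remains to check the hypotheses and read off the constants. I would verify $r_{0}(1)\neq0$, that $s_{0}$ has no zero in $\{\lvert z\rvert\le1\}$ (equivalently, that the unconstrained weighted count has a simple dominant pole at $z=1$ with residue normalized so that $w_{\ell}\to1$), and compute $\delta=s_{1}(1)/s_{0}(1)$, obtaining the stated rational function in $q$ and in particular $\delta>0$. From the eigenvalue separation of the solid-edge subautomaton one checks $\lvert a_{2}\rvert\ge q^{2}$, hence $\rho=\min(\log\lvert a_{2}\rvert/\log q,2)-1=1$, which is precisely what turns the generic error terms $\bigOh(\log^{\rho+3}\ell/\ell^{\rho})$ and $\bigOh(\log^{\rho+4}\ell/\ell^{\rho})$ of \eqref{eq:3} and \eqref{eq:6} into the $\bigOh(\ell^{-1}\log^{4}\ell)$ and $\bigOh(\ell^{-1}\log^{5}\ell)$ of \eqref{eq:1} and \eqref{eq:var}.

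Finally, applying Theorem~\ref{theorem:bootstrapping} with $a=q$ gives the asymptotic formula for $\mathbb P(X_{\ell}\le k)$, the weak convergence of $X_{\ell}-\log_{q}\ell$ to the double exponential law, and the expansions of $\mathbb EX_{\ell}$ and $\mathbb VX_{\ell}$. Shifting by the constant $2$ from $t=X_{\ell}+2$ leaves the variance unchanged (yielding \eqref{eq:var}) and adds $2$ to the expectation, turning the $+\tfrac12$ of \eqref{eq:3} into the $+\tfrac52$ of \eqref{eq:1}; the periodic functions $\Psi_{0},\Psi_{1}$ are inherited verbatim with $a=q$. Transferring from the approximate weight $W_{\ell}$ to the exact equidistribution $\mathbb P_{\ell}$ costs only a factor $1+\bigOh(\xi^{\ell})$, exactly as in the proof of Theorem~\ref{thm:ssde-stand}. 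I expect the main obstacle to be establishing the precise structural form of $G_{k}(z)$ --- in particular verifying the divisibility conditions on the summands of $r_{1}$ and $s_{2}$ and the strict separation $q=a_{1}<\lvert a_{2}\rvert$ --- since this is what licenses the application of the bootstrapping theorem; the explicit determinant and the value of $\delta$ are then a lengthy but routine symbolic computation.
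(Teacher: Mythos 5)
Your proposal follows essentially the same route as the paper: reduce to Theorem~\ref{theorem:bootstrapping} via the probabilistic automaton $\calN_{\SSDE}$, extract $G_k(z)$ by truncating the geometric series of solid runs at length $k$ (with the exit-weight correction for states $2$ and $7$), verify the hypotheses with $a_1=q$, $a_2=-q^2$, hence $\rho=1$, and transfer from $W_\ell$ to $\mathbb P_\ell$ at exponentially small cost. The only imprecision is the claim $t=X_\ell+2$: the cited equivalence only gives $\mathbb P(t\le k+2)=\mathbb P(X_\ell\le k)$ for $k\ge 0$, so the bootstrapping theorem delivers the moments of the non-negative truncation of $t-2$, and one must additionally observe (as the paper does, using \eqref{eq:8} and the monotonicity of $w_{\ell k}$) that $w_{\ell 0}=\bigOh(\ell^{-2})$, so that the truncation does not affect the stated expectation, variance or limiting distribution.
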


\begin{remark}
  A similar result for $q\geq
  4$ was
  obtained in \cite{Heuberger-Prodinger:2003:carry-propag} using the same
  probabilistic model as in Remark~\ref{rem:ignore-dep}. This changes the
  main term of the expected value. In \cite{Heuberger-Prodinger:2003:carry-propag}, the
  logarithm of the main term  was taken to the base $\alpha^{-1}$ with
  $\alpha=q^{-1}-q^{-4}+\bigOh(q^{-5})$. In contrast, we here obtain the
  logarithm of the main term in~\eqref{eq:1} to the base $q$, which is a more
  natural constant appearing in this context.

  For $q=2$, this result is contained in \cite{Heuberger-Prodinger:2003:carry-propag}.
\end{remark}

\begin{proof}
Let $\mathbb P_{\ell}$ be the (exact) equidistribution of all SSDE of length
$\ell$. For $k>\ell+2$, we know that $\mathbb P_{\ell}(t(\bfX, \bfY)\leq k)=1$
because an input sequence of length $\ell$ traverses at most $\ell$ solid edges
in the automaton in Figure~\ref{aut:ssde-von-neu}. 

If we use the approximate
equidistribution $W_{\ell}=(1+\bigOh(\xi^{\ell}))\mathbb P_{\ell}$ of all SSDE of
length $\ell$, an exponentially small error term is introduced. Because of the
finite support, this error term does
 not change the main term of the expectation, the variance and the
 distribution function. Thus, also the limiting distribution remains the same.

 We will use Theorem~\ref{theorem:bootstrapping} with the generating function
  \begin{equation*}
G_{k}(z)=\sum_{\ell\geq 0}w_{\ell k}z^{\ell}
\end{equation*}
for $w_{\ell k}=W_{\ell}(t(\bfx,\bfy)-2\leq k)$, $k\geq 0$.
To construct this generating function, we use the same techniques as in
\cite{Heuberger-Prodinger:2003:carry-propag}.

The generating function $G_{k}(z)$ counts the weighted number of paths in
the automaton
$\calN_{\SSDE}$ of the pattern $\ldots\mathcal B^{+}\mathcal
R^{\{1, k\}}\mathcal B^{+}\mathcal R^{\{1, k\}}\ldots$ where $\mathcal B^{+}$ is an arbitrary non-empty
sequence of dotted transitions and $\mathcal R^{\{1, k\}}$ is a non-empty sequence
of solid transitions of length at most $k$. The first transition can be a dotted
or a solid transition. We stop with either arbitrarily many dotted transitions
or at most $k$ solid transitions, where we have to take into account the
special situation in  states $2$ or $7$ in the
automaton in Figure~\ref{aut:ssde-von-neu} (see also Remark~\ref{rem:read-inf-sequ}): Because of the solid transition
starting in $2$ and $7$
with label $0$, we
are not allowed to stop with $k$ solid transitions in state $2$ or $7$ but
only with at most $k-1$ ones.

To find the generating functions for $\mathcal B^{+}$ and $\mathcal R^{\{1, k\}}$,
we use the transition matrices for the dotted and the solid parts of the
automaton $\calN_{\SSDE}$.

  Let $q\geq 6$.
  The transition matrix $R$ for the solid transitions of automaton $\calN_{\SSDE}$
  is a $12\times 12$ matrix given  in Table~\ref{tab:ssde-neumann-solid} in the
  appendix (using Lemma~\ref{lem:recursion-N}).
The transition matrix $B$ for the dotted transitions of automaton $\calN_{\SSDE}$ is
given in Table~\ref{tab:ssde-neumann-dotted} (using
Lemma~\ref{lem:recursion-N}). The order of the states is given in
\eqref{eq:states-ord} and also in Table~\ref{tab:ssde-neumann-exit-weights} in
the appendix.

The (matrix) generating function for arbitrary non-empty dotted paths $\mathcal B^{+}$ is
\begin{equation*}
  B^{+}(z)=(I-zB)^{-1}-I.
\end{equation*}
The entry $(i,j)$ of this matrix is the generating function of non-empty
dotted paths of arbitrary length starting in state $i$ and leading to state $j$.
For arbitrary non-empty solid paths, the (matrix) generating function is
\begin{equation*}
  R^{+}(z)=(I-zR)^{-1}-I.
\end{equation*}

To obtain the (matrix) generating function $R^{\{1, k\}}$ for non-empty solid paths $\mathcal R^{\{1,
  k\}}$ of length at most $k$, we have to restrict each entry of $R^{+}$
corresponding to an infinite geometric series to a finite geometric
series.\footnote{It is also possible to use $R^{\{1,k\}}(z)=zR+\cdots+z^{k}R^{k}=(I-z^{k+1}R^{k+1})(I-zR)^{-1}-I$. However, this involves a power
  of the \emph{symbolic} matrix $R$ with the \emph{symbolic} exponent $k$. This would require a full symbolic
  eigenvalue decomposition of $R$. The approach chosen here avoids this by
  introducing the length restriction on each entry individually.} We
will illustrate this procedure on
\begin{equation}\label{eq:entry-rcal}
  \frac{- q^{4} z + 10 q^{3} z - 3 q^{2} z^{2} - 24 q^{2} z + 10 q z^{2} +
8 z^{2}}{-8 q^{4} + 8 q^{3} z - 8 q^{2} z + 8 q z^{2}},
\end{equation}
the entry at position $(5,1)$ of $R^{+}$. The partial fraction
decomposition of~\eqref{eq:entry-rcal} with respect to $z$ is
\begin{equation*}
  -\frac{(3q+2)(q-4)}{8  q}+ \frac{(q-4)(q+4)
}{4  (q + 1)}\cdot\frac{1}{ 1 + z/q^{2}} +\frac{(q-1)(q-2)(q-4)}{8  q (q +
1)}\cdot\frac{1}{1 - z/q}.
\end{equation*}
By truncating the infinite geometric sum $(1-z)^{-1}$ after $k+1$ summands,
i.e.,  by replacing it with
$(1-z^{k+1})(1-z)^{-1}$, we obtain
\begin{equation*}
    -\frac{(3q+2)(q-4)}{8  q}+ \frac{(q-4)(q+4)}{4  (q + 1)}\cdot\frac{1-(-z/q^{2})^{k+1}}{ 1 + z/q^{2}} +\frac{(q-1)(q-2)(q-4)}{8  q (q +
1)}\cdot\frac{1-(z/q)^{k+1}}{1 - z/q}.
\end{equation*}

Let 
\begin{equation*}
  M_{k}(z)=\begin{pmatrix}
    0&B^{+}(z)\\R^{\{1, k\}}(z)&0
  \end{pmatrix}
\end{equation*}
be the block matrix of total size $24\times 24$.
Then, the (matrix) generating function of non-empty paths $\ldots\mathcal B^{+}\mathcal R^{\{1,
  k\}}\mathcal B^{+}\mathcal R^{\{1, k\}}\ldots$ is
\begin{equation*}
  (I-M_{k}(z))^{-1}-I.
\end{equation*}

To take into account the initial states and the exit weights in the automaton
$\calN_{\SSDE}$, we define the initial vector
\begin{equation*}
  u=(0, 0, 0, 0, 0, 0, 0, 0, 1, 0, 0, 0;0, 0, 0, 0, 0, 0, 0, 0, 1, 0, 0, 0).
\end{equation*}
Using the exit weights in Table~\ref{tab:ssde-neumann-exit-weights} in the
appendix, we further define the exit vector
\begin{multline*}
  v^{\top}=\Big(\frac{q+1}{q+2}\Big)^{2} (4, 1, 2, 0, 1, 0, 2, 2, 1, 1, 4, 2;
  4, 1, 2, 2, 1, 1, 2, 2, 1, 1, 4, 2)^{\top}\\+\Big(\frac{q+1}{q+2}\Big)^{2}M_{k-1}(z)( 0,0,0,2,0,1,0,0,0,0,0,0;    0,0,0,0,0,0,0,0,0,0,0,0)^{\top}
\end{multline*}
taking into account the special situation with states $2$ and $7$ in the automaton in
Figure~\ref{aut:ssde-von-neu}.

Then, the generating function is
\begin{align*}
  G_{k}(z)=u((I-M_{k}(z))^{-1}-I)v+\Big(\frac
  {q+1}{q+2}\Big)^{2}
\end{align*}
where we add the exit weight of state $(1,(0,0))$ because the empty word  was not counted until now.
The result is
\begin{align}
  \label{eq:gf}
  G_{k}(z)=\frac{r_{0}(z)+\big(\frac{z}{q}\big)^{k}r_{1}\big(z,
    \big(\frac{z}{q}\big)^{k}, \big({-}\frac
    z{q^{2}}\big)^{k}\big)}{(1-z)s_{0}(z)+\big(\frac zq\big)^{k}s_{1}(z)+\big({-}\frac z{q^{2}}\big)^{k}s_{2}\big(z,
    \big(\frac z q\big)^{k}, \big({-}\frac z{q^{2}}\big)^{k}\big)}
\end{align}
with
\begin{align*}
r_{0}(z)&=4 q^{7}  (q + 1)^{3}   (4 z^{2} - 3 q^{2} z- q^{3})
  (  2 q z^{4}- 4 z^{4}+ 8 q^{3} z^{2}- q^{5} z^{2} - 6 q^{4} z^{2}+4 q^{7} ),\\
s_{0}(z)&=4 q^{7}  (q + 1)  (q + 2)^{2}  (z+q) 
 (z- q^{2})  (2 q z^{4}- 4 z^{4}+8q^{3} z^{2} -  q^{5} z^{2} - 6 q^{4} z^{2}
 +4 q^{7} ),\\
s_{1}(z)&=- (q + z)  z^2 (q + 2)^2  q^4  (4q^{12} + 6q^{11}z + 2q^{10}z^2 -
4q^{11} - 24q^{10}z\\
&\quad - 8q^9z^2 + 24q^{10} + 26q^9z + 4q^8z^2 + 5q^7z^3 - 7q^6z^4 - 48q^9 \\
&\quad- 20q^8z + 18q^7z^2 - 9q^6z^3 + 34q^5z^4 + 5q^4z^5 + 32q^8 + 36q^6z^2 \\
&\quad- 32q^5z^3 - 59q^4z^4 - 25q^3z^5 - 112q^5z^2 + 84q^4z^3 + 40q^3z^4 \\
&\quad+ 44q^2z^5 + 64q^4z^2 - 48q^3z^3 + 4q^2z^4 - 36qz^5 - 16qz^4 + 16z^5)
\end{align*}
  and some polynomials $r_{1}$ and $s_{2}$ in $z$, $(z/q)^{k}$ and $(-z/q^{2})^{k}$
with coefficients in $\mathbb Q[q]$.
The polynomial $s_{0}$ does not have any zeros in the closed unit disc. We
have $r_{0}(1)\neq 0$ and $\delta >0$.

For $q\leq 4$, the construction of the generating function $G_{k}(z)$ is the same, only the
matrices $R$ and $B$ and the vectors $u$ and $v$ are slightly
different. Nevertheless, \eqref{eq:gf} including the definitions of all the occurring polynomials is still valid.

By Theorem~\ref{theorem:bootstrapping}, we obtain the expectation, the
variance, the distribution function and the limiting distribution of the
non-negative truncation of
$t(\bfx,\bfy)-2$. From \eqref{eq:8} and the monotonicity of $w_{\ell k}$, we
know that
$w_{\ell-2}=w_{\ell-1}=\bigOh(\ell^{-2})$. Therefore, the results
transfer to the random variable $t(\bfx, \bfy)$ as stated in the theorem.
\end{proof}

\bibliographystyle{amsplainurl}
\bibliography{cheub}

\newpage
\begin{appendix}
\section{Transition Matrices}
\label{app:transition-matrices}
  
\begin{table}[h]
  \centering
  \begin{equation*}\setlength{\extrarowheight}{4pt}
  \begin{array}{ccc}
  (-1,(0,0))    &(0,(0,0))                      &(1,(0,0))\\\hline
  (d - 1)(d - 2)y& -2d^2 - 2dq + q^2 + 6d + 3q -4 & (d + q - 1)(d + q - 2)x\\
  (d - 1)dy       &    -2d^2 - 2dq + q^2 + 2d +q  & (d + q)(d + q - 1)x\\
  (d + 1)d y      &   -2d^2 - 2dq + q^2 - 2d -q   & (d + q + 1)(d + q)x
\end{array}
\end{equation*}

\caption{Transition matrix of $\calS_{(q,d)}$ in
  Section~\ref{sec:standard-addition-q-d} multiplied with $2q^{2}$. The order
  of the states is given in the first line.}
\label{tab:q-d-stand}
\end{table}

\begin{landscape}
\begin{table}
  \centering\scriptsize
  \begin{equation*}\setlength{\extrarowheight}{2pt}
  \begin{array}{*{14}{c}}
6q^2 - 12q + 8&4&4(q - 2)y&8&4q - 8&4q - 8&8&4(q - 2)x&(q - 2)(q - 4)y&(q - 2)(q - 4)x&2y&2x&4q - 8&4q - 8\\
8q^2&0&0&0&0&0&0&0&0&0&0&0&0&0\\
6(q - 2)q&0&4qy&0&0&4q&0&0&2(q - 2)qy&0&0&0&8q&0\\
2(qy + 2q - 2y)q&0&4qy^2&0&0&4qy&0&0&2(q - 2)qy^2&0&0&0&0&0\\
2(3q - 2)q&0&4(q - 2)y&8&0&4q - 8&8&0&2(q - 2)(q - 4)y&0&0&0&8q - 16&0\\
2(3q - 2)q&0&0&8&4q - 8&0&8&4(q - 2)x&0&2(q - 2)(q - 4)x&0&0&0&8q - 16\\
2(qx + 2q - 2x)q&0&0&0&4qx&0&0&4qx^2&0&2(q - 2)qx^2&0&0&0&0\\
6(q - 2)q&0&0&0&4q&0&0&4qx&0&2(q - 2)qx&0&0&0&8q\\
6(q - 2)q&4&4qy&8&4q - 16&4q&8&4(q - 4)x&(q - 2)qy&(q - 4)(q - 6)x&2y&2x&4q&4q - 16\\
6(q - 2)q&4&4(q - 4)y&8&4q&4q - 16&8&4qx&(q - 4)(q - 6)y&(q - 2)qx&2y&2x&4q - 16&4q\\
4(q - 2)q&0&0&0&0&0&0&0&4(q - 2)qy&0&0&0&16q&0\\
4(q - 2)q&0&0&0&0&0&0&0&0&4(q - 2)qx&0&0&0&16q\\
(3qy + 3q - 6y - 2)q&4&4qy^2&0&4q - 8&4qy&0&4(q - 2)x&(q - 2)qy^2&(q - 2)(q - 4)x&2y&2x&0&0\\
(3qx + 3q - 6x - 2)q&4&4(q - 2)y&0&4qx&4q - 8&0&4qx^2&(q - 2)(q -
4)y&(q - 2)qx^2&2y&2x&0&0
  \end{array}
\end{equation*}

  \caption{Transition matrix of $\calS_{\SSDE}$ for $q\geq 8$ in
    Section~\ref{sec:stand-addit-ssde} multiplied with $8q^{2}$. The order
  of the states is $\{(0,(0, 0))\}$,
$\{(0,(-1, 1)), (0,(1, -1))\}$,
$\{(-1,(-1, 0)), (-1,(0, -1))\}$,
$\{(-q/2,(-1, 0)), (-q/2,(0, -1))\}$,
$\{(0,(-1, 0)), (0,(0, -1))\}$,
$\{(0,(0, 1)), (0,(1, 0))\}$,
$\{(q/2,(0, 1)), (q/2,(1, 0))\}$,
$\{(1,(0, 1)), (1,(1, 0))\}$,
$\{(-1,(0, 0))\}$,
$\{(1,(0, 0))\}$,
$\{(-1,(-1, -1))\}$,
$\{(1,(1, 1))\}$,
$\{(-q/2,(0, 0))\}$,
$\{(q/2,(0, 0))\}$.}
  \label{tab:ssde-stand}
\end{table}

\begin{table}[h]
  \centering\scriptsize
  \begin{equation*}\setlength{\extrarowheight}{4pt}
  \begin{array}{*{12}{c}}
    (1,(-1, 1))&(4,(0, 0))&(5,(0, 1))&(2,(0, 1))&(5,(0, 0))&(2,(0,0))&(1,(-1, 0))&(3,(0, 1))&(1,(0, 0))\}&(3,(0, 0))&(4,(1, 1))&(4,(0, 1))\\\hline
  4&1&2&2&1&1&2&2&1&1&4&2
\end{array}
\end{equation*}
\caption{Exit weights of $\calN_{\SSDE}$ in
  Section~\ref{sec:asympt-analys-von} multiplied with
  $\big(\frac{q+2}{q+1}\big)^{2}$. As discussed in \eqref{eq:states-ord}, the
  states of $\calN_{\SSDE}$ are
  equivalence classes of states. For brevity, we list one representative for
  each state of $\calN_{\SSDE}$ to give the order of the
  states.}
\label{tab:ssde-neumann-exit-weights}
\end{table}
  \begin{table}
    \centering
    \begin{equation*}\setlength{\extrarowheight}{4pt}
      \begin{array}{*{12}{c}}
        0 & 0 & 0 & 0 & 0 & 0 & 0 & 0 & 0 &
        0 & 0 & 0 \\
        0 & 0 & 0 & 0 & 0 & 0 & 0 & 0 & 0 &
        0 & 0 & 0 \\
        0 & 0 & 0 & 0 & 0 & 0 & 0 & 0 & 0 &
        0 & 0 & 0 \\
        0 & 0 & 0 & 0 & 0 & 0 & 0 & 0 & 4q(q-2) & 8 q & 0 & 0 \\
        0 & 0 & 0 & 0 & 0 & 0 & 0 & 0 & 0 &
        0 & 0 & 0 \\
        4 &  (q-4)(q-6) & 0 & 8 & 0 & 4 (q - 4) & 4
        (q - 4) & 8 &  3q(q-2)& 4 q & 4 & 4 (q - 4) \\
        0 & 0 & 0 & 0 & 0 & 0 & 0 & 0 & 0 &
        0 & 0 & 0 \\
        0 &  2(q-2)(q-4) & 0 & 8 & 0 & 8 (q - 2) &
        4 (q - 2) & 8 &  2q(q-2) & 0 & 0 & 4 (q - 2) \\
        0 & 0 & 0 & 0 & 0 & 0 & 0 & 0 & 0 &
        0 & 0 & 0 \\
        0 &  (q-2)(q-4) & 0 & 8 & 0 & 4 (q - 2) & 4 (q
        - 2) & 8 &  (3q - 4)(q - 2)& 4 (q - 2) & 0 & 4 (q - 2)
        \\
        0 & 0 & 0 & 0 & 0 & 0 & 0 & 0 & 0 &
        0 & 0 & 0 \\
        0 & 0 & 0 & 0 & 0 & 0 & 0 & 0 & 0 &
        0 & 0 & 0
      \end{array}
    \end{equation*}
  \caption{Transition matrix $R$ for the solid transitions in $\calN_{\SSDE}$
    for $q\geq 6$ in
    Section~\ref{sec:asympt-analys-von} multiplied with
    $8q^{2}$. The order of the states is the
    same as in Table~\ref{tab:ssde-neumann-exit-weights}.}
  \label{tab:ssde-neumann-solid}
\end{table}
\begin{table}
  \centering
  \begin{equation*}\setlength{\extrarowheight}{4pt}
  \begin{array}{*{12}{c}}
    0 & 0 & 0 & 0 & 0 & 0 & 0 & 0 & 8
q^{2} & 0 & 0 &0\\
4 & 2 (q - 4)^{2} & 0 & 16 & 0 & 8 (q - 3) & 8 (q - 3) & 16 & 2  (3  q -
2) (q - 2) & 8  (q - 1) & 4&8(q-3) \\
0 & 2 (q - 2) (q - 4) & 0 & 8
& 0 & 8  (q - 2) & 4 (q - 2) & 8 & 2 q(3 q - 2)  & 0 & 0 &4(q-2)\\
0 & 2 (q - 2) (q - 4) & 0 & 8
& 0 & 8  (q - 2) & 4  (q - 2) & 8 & 2q (q -
2)  & 0 & 0 &4(q-2)\\
4 & 2 (q - 2)(q - 4) & 0 & 8
& 0 & 4 (q - 2) & 8  (q - 2) & 8 & 6  q^{2} - 12
 q + 8 & 4 (q - 2) & 4&8(q-2) \\
0 & (q - 2) (q - 4) & 0 & 8 &
0 & 4 (q - 2) & 4 (q - 2) & 8 & (3  q -
4)(q - 2) & 4( q - 2) & 0&4(q-2) \\
0 & 2 (q - 2) (q - 4) & 16 & 0
& 8  (q - 2) & 0 & 4  (q - 2) & 0 & 2q (3
 q - 2)  & 0 & 0 &4(q-2)\\
0 & 0 & 0 & 0 & 0 & 0 & 0 & 0 & 4 
q^{2} & 0 & 0 &0\\
4 & 2 (q - 2)(q - 4) & 16 & 0
& 8  (q - 2) & 0 & 8  (q - 2) & 0 & 6  q^{2} -
12  q + 8 & 0 & 4&8(q-2) \\
4 & (q - 2) (q - 4) & 0 & 0 &
0 & 0 & 4 (q - 2) & 0 & q(3 q - 2) 
& 0 & 4& 4(q-2)\\
0 & 4 (q - 2) (q - 4) & 0 & 0
& 0 & 16 (q - 2) & 0 & 0 & 4 q(q -
2)  & 16 q & 0&0\\
0&2(q - 2)(q - 4)&0&8&0&8(q - 2)&4(q - 2)&8z&6(q - 2)q&8q&0&4(q - 2)
  \end{array}
\end{equation*}
  \caption{Transition matrix $B$ for the dotted transitions in $\calN_{\SSDE}$
    for $q\geq 6$ in
    Section~\ref{sec:asympt-analys-von} multiplied with $8q^{2}$. The order of the states is the
    same as in Table~\ref{tab:ssde-neumann-exit-weights}.}
\label{tab:ssde-neumann-dotted}
\end{table}

\end{landscape}

\end{appendix}

\end{document}